\numberwithin{equation}{section}
\DeclareMathOperator{\GL}{GL}
\newcommand{\Ccal}{\mathcal{C}}
\newcommand{\Dcal}{\mathcal{D}}
\newcommand{\Gcal}{\mathcal{G}}
\newcommand{\Ocal}{\mathcal{O}}
\newcommand{\Rcal}{\mathcal{R}}
\newcommand{\Lcal}{\mathcal{L}}
\newcommand{\Fbb}{\mathbb{F}}
\newcommand{\Q}{\mathbb{Q}}
\newcommand{\Z}{\mathbb{Z}}
\newcommand{\Mfrak}{\mathfrak{M}}
\newcommand{\mfrak}{\mathfrak{m}}
\newcommand{\Spec}{{\rm Spec}}
\newcommand{\Spf}{{\rm Spf}}
\newcommand{\id}{{\rm id}}
\newcommand{\Hom}{{\rm Hom}}
\newcommand{\Endo}{{\rm End}}
\newcommand{\Res}{{\rm Res}}
\newcommand{\pr}{{\rm pr}}
\newcommand{\kernel}{{\rm ker}}
\newcommand{\Gal}{{\rm Gal}}
\newcommand{\fl}{{\rm fl}}
\newtheorem{theo}{Theorem}[section]
\newtheorem{lem}[theo]{Lemma}
\newtheorem{prop}[theo]{Proposition}
\newtheorem{cor}[theo]{Corollary}
\theoremstyle{remark}
\newtheorem{rem}[theo]{Remark}
\theoremstyle{remark}
\theoremstyle{definition}
\begin{document}

\title[The image of the coefficient space]{The image of the coefficient space in the universal deformation space of a flat Galois representation of a $p$-adic field}
\author[E. Hellmann]{Eugen Hellmann}

\begin{abstract}
The coefficient space is a kind of resolution of singularities of the universal flat deformation space for a given Galois representation of some local field.
It parametrizes (in some sense) the finite flat models for the Galois representation. The aim of this note is to determine the image of the coefficient space in the universal deformation space.
\end{abstract}
\maketitle

\section{Introduction}
In the theory of deformations of Galois representations one is often interested in a subfunctor of the universal deformation functor consisting of those deformations that satisfy certain extra conditions, so called \emph{deformation conditions} (cf. \cite[§23]{Mazur}). If we deal with a representation of the absolute Galois group of a finite extension $K$ of $\Q_p$ in a finite dimensional vector space in characteristic $p$, there is the deformation condition of being \emph{flat}, which means that there is a finite flat group scheme over the ring of integers of $K$ such that the given Galois representation is isomorphic to the action of the Galois group on the generic fiber. The structure of the ring pro-representing this deformation functor is of interest for modularity lifting theorems (see \cite{Kisin} for example). To get more  information about this structure, Kisin constructs some kind of "resolution of singularities" of the spectrum of the flat deformation ring. This resolution is a scheme parametrizing modules with additional structure that define possible extensions of the representation to a finite flat group scheme over the ring of integers. In \cite{phimod} Pappas and Rapoport globalize Kisin's construction and define a so called \emph{coefficient space} parametrizing all Kisin modules that give rise to the given representation.\\
Following the presentation in \cite{phimod} we want to determine here the image of the coefficient space in the universal deformation space. This question was raised by Pappas and Rapoport in \cite[4.c]{phimod}. Further we show how to recover Kisin's results from the more abstract setting in \cite{phimod}.
The main result of this note is as follows.\\
Let $K$ be a finite extension of $\Q_p$, where $p$ is an odd prime, and $\bar\rho:G_K\longrightarrow \GL_d(\Fbb)$ be a continuous flat representation of the absolute Galois group $G_K=\Gal(\bar K/K)$ on some $d$-dimensional vector space over a finite field $\Fbb$ of characteristic $p$. 
If $\xi:G_K\longrightarrow \GL_d(A)$ is a deformation of $\bar\rho$, we write $C_K(\xi)$ for the coefficient space of (locally free) Kisin modules over $\Spec\ A$ that are related to the flat models for the deformation $\xi$ (see also the definition below).
\begin{theo}
Assume that the flat deformation functor of $\bar\rho$ is pro-representable by a complete local noetherian ring $R^\fl$. We write $\rho:G_K\rightarrow \GL_d(R^\fl)$ for the universal flat deformation. Then the morphism $C_K(\rho)\rightarrow \Spec\ R^\fl$ is topologically surjective.
\end{theo}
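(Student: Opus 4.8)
\emph{Proof strategy.} The plan is to show that the image of $C_K(\rho)$ in $\Spec R^\fl$ contains $V(\mathfrak{p})$ for every prime $\mathfrak{p}\subset R^\fl$, by producing, for each such $\mathfrak{p}$, an $R^\fl$-morphism $\Spec(R^\fl/\mathfrak{p})\to C_K(\rho)$; its image then covers the closed immersion $\Spec(R^\fl/\mathfrak{p})\hookrightarrow\Spec R^\fl$, i.e.\ all of $V(\mathfrak{p})$, and letting $\mathfrak{p}$ range over all primes gives topological surjectivity. Throughout I would use that $\pi\colon C_K(\rho)\to\Spec R^\fl$ is proper: this is part of the construction recalled above, the flat models being classified by Kisin modules satisfying a bounded, minuscule condition. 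Fix $\mathfrak{p}$, put $B:=R^\fl/\mathfrak{p}$ --- a complete local noetherian domain with residue field $\Fbb$ --- and set $\mathcal{C}:=C_K(\rho)\times_{\Spec R^\fl}\Spec B$; then the task is to construct a section of $\mathcal{C}\to\Spec B$.

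First I would construct this section level by level and then algebraize. For each $n$ the representation $\rho\otimes_{R^\fl}B/\mathfrak{m}_B^n$ is, by the universal property of $R^\fl$, a flat deformation of $\bar\rho$ to the Artinian ring $B/\mathfrak{m}_B^n$; hence Kisin's construction --- in the form recalled from \cite{phimod} and \cite{Kisin} --- attaches to a finite flat group scheme over $\Ocal_K$ realizing it a locally free Kisin module over $\mathfrak{S}_{B/\mathfrak{m}_B^n}$, i.e.\ a point $\sigma_n\in\mathcal{C}(B/\mathfrak{m}_B^n)$. Since $\Fbb$ is finite, each ring $B/\mathfrak{m}_B^n$ is finite, so (as $\mathcal{C}\to\Spec B$ is of finite type) the set $\mathcal{C}(B/\mathfrak{m}_B^n)$ is finite; an inverse system of nonempty finite sets has nonempty inverse limit, so the $\sigma_n$ can be chosen compatibly. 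Finally, by the properness of $\mathcal{C}\to\Spec B$ and Grothendieck's existence theorem (applicable since $B$ is $\mathfrak{m}_B$-adically complete), the compatible family $(\sigma_n)_n$ is induced by an honest section $\sigma\colon\Spec B\to\mathcal{C}$; composing with $\mathcal{C}\to C_K(\rho)$ gives the required morphism.

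The main difficulty will be securing the two structural inputs in the generality needed. First, one must have that $C_K(\rho)$ is an honest \emph{proper scheme} over $\Spec R^\fl$ --- not merely an ind-scheme or a scheme of finite type --- so that Grothendieck existence applies directly; this is exactly what the minuscule, bounded truncation of the Kisin-module condition provides, but it should be isolated with care. Second, one needs the coefficient space to parametrize the flat models compatibly with the deformation problem pro-represented by $R^\fl$, so that each $\mathcal{C}(B/\mathfrak{m}_B^n)$ is nonempty --- that is, that Kisin's functor from finite flat group schemes over $\Ocal_K$ to locally free Kisin modules matches, over Artinian coefficients, the notion of flatness defining $R^\fl$. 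I would also want to note that the hypothesis that $\Fbb$ is finite is used essentially, to pass from the inverse system of fibres to a compatible point (without it one would replace the compactness step by a Mittag-Leffler or valuative-criterion argument), and that the construction is uniform in the residue characteristic of the points of $\Spec R^\fl$; in particular it uses no $\Z_p$-flatness of $R^\fl$, which is the feature that should allow one to recover Kisin's results on the flat deformation ring from the coefficient space.
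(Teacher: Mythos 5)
Your overall strategy --- reduce to lifting $\Spec(R^\fl/\mathfrak{p})$ over each prime $\mathfrak{p}$, construct a compatible family of Artinian points, algebraize by formal GAGA over the proper morphism --- has the same large-scale shape as the paper's proof, but the key step is left as an unverified ``structural input,'' and that step is in fact where all the difficulty lies. Specifically, you need each $\mathcal{C}(B/\mathfrak{m}_B^n)$ to be \emph{nonempty}, and you justify this by saying that ``Kisin's construction\ldots attaches to a finite flat group scheme over $\Ocal_K$ realizing it a locally free Kisin module over $\mathfrak{S}_{B/\mathfrak{m}_B^n}$.'' This is not what the classification (Theorem \ref{grpschemeclass}) gives. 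The equivalence only produces a $W[[u]]$-module of projective dimension $1$: the coefficient action need not extend to the group scheme, and even after replacing $\Mfrak$ by its $A_W[[u]]$-span inside $M(\xi|_{G_{K_\infty}})$, the resulting $A_W[[u]]$-module is in general \emph{not} locally free. This is exactly the warning issued in the paper before Proposition \ref{existlift}, and the span-is-free conclusion is only available when $e<p-1$ (via Raynaud's uniqueness, as in Proposition \ref{lowram}). So for general ramification there is no reason for $\mathcal{C}(B/\mathfrak{m}_B^n)$, and hence your inverse system, to be nonempty; you are using compactness on a system of sets whose nonemptiness is the whole point of the theorem.

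The paper circumvents this gap with a mechanism that genuinely requires one-dimensional coefficients. After reducing, via Proposition \ref{charzeroiso} for the generic fibre and a marking of specializations $\eta\rightsquigarrow x_0$ in the special fibre, to coefficient rings $\Ocal_F$ that are complete discrete valuation rings in characteristic $p$, the compactness argument (Lemma \ref{finiteset}) is applied not to $\mathcal{C}(\Ocal_F/\varpi^{n+1})$ but to the finite nonempty sets $\mathcal{Z}_n$ of \emph{finitely generated but not necessarily free} $A_n[[u]]$-submodules. Proposition \ref{algebraizable} then shows how to pass to a $\Phi$-module $(N,\Phi)\in\Rcal(\Ocal_F)$, and Proposition \ref{existlift} upgrades this to a genuinely free $\Ocal_F[[u]]$-lattice by the valuative criterion of properness for $\Ccal_r$ (and then $\Ccal_K^1\subset\Ccal_e^1$), applied to the lattice that exists trivially over the fraction field $F$. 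That valuative step is exactly what has no analogue over a general complete local domain $B=R^\fl/\mathfrak{p}$. So your proposal omits the argument that carries the theorem; in its place it would need something equivalent to Propositions \ref{boundedtorsion}--\ref{existlift}, and the passage to DVR coefficients is not an optional simplification but the device that makes the proof go through.
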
 
\begin{cor}
If it exists, the flat deformation ring $R^\fl$ is topologically flat, i.e. the generic fiber $\Spec\,R^\fl[1/p]$ is dense in $\Spec\,R^\fl$.
\end{cor}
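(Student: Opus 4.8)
The plan is to derive the corollary from the theorem together with one structural fact about the coefficient space: $C_K(\rho)$ is flat over $\Z_p$, equivalently its structure sheaf has no $p$-torsion. This is really a statement about the ambient space rather than about deformation theory. Namely, $C_K(\rho)$ is a closed subscheme of the (twisted) moduli space of locally free rank-$d$ Kisin modules of bounded height (height $\le 1$ in the flat case), which in turn is cut out of a bounded piece of a twisted affine Grassmannian; and the latter is flat over $\Z_p$ by the theory of local models --- indeed, for the minuscule $\GL_d$-situation occurring here the relevant local model is a disjoint union of ordinary Grassmannians. I would extract this from \cite{phimod} (or from Kisin's construction in the $\GL_d$-case) and record it as a lemma before proving the corollary. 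Granting it: if $\Spec B\subseteq C_K(\rho)$ is an affine open then $B$ is $p$-torsion free, so $\kernel(B\to B[1/p])=0$, hence the principal open $\Spec B[1/p]$ is dense in $\Spec B$; since density can be tested on an affine open cover, the open subscheme $C_K(\rho)[1/p]\subseteq C_K(\rho)$ on which $p$ is invertible is dense in $C_K(\rho)$.

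With this density in hand, the corollary is a soft topological argument. Write $f\colon C_K(\rho)\to\Spec R^\fl$ for the structure morphism. Because $p$ comes from $R^\fl$ one has $f^{-1}\bigl(\Spec R^\fl[1/p]\bigr)=C_K(\rho)[1/p]$, and in particular $f\bigl(C_K(\rho)[1/p]\bigr)\subseteq\Spec R^\fl[1/p]$. Since $f$ is continuous and, by the theorem, topologically surjective,
\[
\Spec R^\fl \;=\; f\bigl(C_K(\rho)\bigr)\;=\;f\bigl(\,\overline{C_K(\rho)[1/p]}\,\bigr)\;\subseteq\;\overline{\,f\bigl(C_K(\rho)[1/p]\bigr)\,}\;\subseteq\;\overline{\Spec R^\fl[1/p]},
\]
where the first equality is the topological surjectivity of the theorem, the second is the density just established, the first inclusion is continuity of $f$, and the last is the inclusion noted above. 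Hence $\Spec R^\fl[1/p]$ is dense in $\Spec R^\fl$, as claimed. (Equivalently: the ideal $\kernel(R^\fl\to R^\fl[1/p])$ of $p$-power torsion elements is contained in every prime of $R^\fl$, hence, $R^\fl$ being noetherian, is nilpotent.)

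The only genuinely non-formal ingredient is the $\Z_p$-flatness of $C_K(\rho)$, and that is the point I expect to require the most care at the present level of generality, as it ultimately rests on flatness of (minuscule) local models. Everything else is formal; in particular properness of $f$ is \emph{not} needed --- continuity together with topological surjectivity already does the job, and the latter is exactly the content of the theorem.
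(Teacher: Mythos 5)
Your final topological argument is correct and is exactly what the paper does implicitly: combine Theorem \ref{maintheo} (topological surjectivity of $C_K(\rho)\to\Spec R^\fl$) with density of the generic fiber of $C_K(\rho)$, and push forward via $\Spec R^\fl = f(C_K(\rho)) = f(\overline{C_K(\rho)[1/p]}) \subseteq \overline{f(C_K(\rho)[1/p])} \subseteq \overline{\Spec R^\fl[1/p]}$. That part needs no repair.

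The problem is the structural input you feed into it. You claim $C_K(\rho)$ is genuinely $\Z_p$-flat, justified by the assertion that ``for the minuscule $\GL_d$-situation occurring here the relevant local model is a disjoint union of ordinary Grassmannians.'' This is false in the case the paper actually cares about, namely $K/\Q_p$ ramified: the group is $\Res_{K/\Q_p}\GL_d$, and its local models are the (in general singular) schemes of \cite{lokmod}, not products of Grassmannians --- indeed Proposition \ref{localstructure} invokes \cite[Theorem 5.4]{lokmod} precisely to get normality, Cohen--Macaulayness and \emph{rational singularities} for the special fiber, all of which would be vacuous if the thing were a Grassmannian. Moreover, the scheme $M_K$ appearing in the local model diagram (\ref{localmodel}) only agrees with $\coprod_i M_{\mu_i,K}^{\rm loc}$ after inverting $p$; over $\Z_p$ the two need not coincide, and the paper's remark following Proposition \ref{localstructure} points out that $M_K\otimes_{\Z_p}\Fbb_p$ has nilpotents precisely because the $M_{\mu,K}^{\rm loc}$ live over ramified reflex rings. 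Full $\Z_p$-flatness of $C_K(\rho)$ is therefore not what the paper establishes and your sketch does not establish it either.

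Fortunately the fix is already in the paper and is strictly weaker than what you asked for: Proposition \ref{localstructure} proves that $C_K(\rho)$ is \emph{topologically} flat, i.e.\ its generic fiber is dense (equivalently, the $p$-power torsion ideal in any affine open $\Spec B$ is nilpotent rather than zero). Your chain of inclusions only uses density of $C_K(\rho)[1/p]$, not $p$-torsion-freeness of the structure sheaf, so you should replace your Lemma by a citation to Proposition \ref{localstructure} and drop the Grassmannian claim; everything downstream goes through unchanged. You are also right that no properness of $f$ is needed, only continuity and the surjectivity furnished by Theorem \ref{maintheo}.
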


If the ramification index of the local field $K$ over $\Q_p$ is smaller than $p-1$, then this implies the following result, already contained in \cite{phimod}
\begin{cor}
Denote by $e$ the ramification index of $K$ over $\Q_p$. Assume that the flat deformation functor of $\bar\rho$ is pro-representable and that $e<p-1$. Then $R^\fl$ is the scheme theoretic image of the coefficient space.
\end{cor}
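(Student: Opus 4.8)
The plan is to reinterpret the statement and then supply the one piece of input beyond the Theorem that the range $e<p-1$ provides. Since $C_K(\rho)$ is projective, hence quasi-compact and quasi-separated, over $\Spec R^\fl$ by its construction in \cite{phimod}, the scheme-theoretic image of $C_K(\rho)\to\Spec R^\fl$ is $\Spec(R^\fl/I)$ with $I=\kernel\bigl(R^\fl\to\Gamma(C_K(\rho),\Ocal_{C_K(\rho)})\bigr)$, and the corollary asserts exactly that $I=0$. Now bring in $e<p-1$: by Raynaud's theorem a finite flat $\Ocal_K$-group scheme is determined by its generic fibre in this range, so the Kisin module attached to a flat deformation is unique, and therefore $C_K(\rho)\to\Spec R^\fl$ is a monomorphism; being also proper it is a closed immersion, identifying $C_K(\rho)$ with the closed subscheme $\Spec(R^\fl/I)\subseteq\Spec R^\fl$. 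By the Theorem this morphism is topologically surjective, so $I$ is contained in the nilradical of $R^\fl$; it remains to promote ``$I$ nilpotent'' to ``$I=0$''.

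For this I would show that $C_K(\rho)(A)\to(\Spec R^\fl)(A)$ is surjective for every Artinian local ring $A$, i.e.\ that every flat deformation $\xi\colon G_K\to\GL_d(A)$ over such an $A$ admits a \emph{locally free} Kisin module over $A$. By definition $\xi$ arises from a finite flat $\Ocal_K$-group scheme carrying an action of $A$; since $p$ is odd, Kisin's classification attaches to it a Kisin module $\Mfrak$ over $\mathfrak{S}_A$, and the crux is that when $e<p-1$ this $\Mfrak$ is locally free over $\mathfrak{S}_A$ --- equivalently, that no non-free Kisin modules occur in this range, which is precisely what makes the resolution of \cite{phimod} collapse. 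Granting this, apply the surjectivity to $A=R^\fl/\mfrak^n$: the tautological point $R^\fl\to R^\fl/\mfrak^n$ lifts to a point of $C_K(\rho)$, so, composing with the closed immersion $\Spec(R^\fl/I)\hookrightarrow\Spec R^\fl$, the map $R^\fl\to R^\fl/\mfrak^n$ factors through $R^\fl/I$, whence $I\subseteq\mfrak^n$. Since this holds for every $n$, Krull's intersection theorem gives $I=0$.

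An alternative endgame, if one also knows that $R^\fl$ is $p$-torsion free when $e<p-1$, runs through the generic fibre: then $R^\fl[1/p]$ is reduced (indeed formally smooth over $\Q_p$ by Fontaine--Laffaille), so $I[1/p]=0$ and $C_K(\rho)[1/p]\cong\Spec R^\fl[1/p]$; the resulting section over the generic fibre, composed with $\Spec(R^\fl/I)\hookrightarrow\Spec R^\fl$, exhibits the localization $R^\fl\to R^\fl[1/p]$ as factoring through $R^\fl/I$, and $p$-torsion freeness then forces $I=0$.

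The main obstacle is the key input of the second paragraph: upgrading Raynaud's \emph{uniqueness} of finite flat models to the \emph{existence} of a locally free Kisin module over an arbitrary Artinian coefficient ring when $e<p-1$ --- equivalently, the statement that the coefficient functor is an isomorphism onto, not merely a monomorphism into, the flat deformation functor. This requires the finer structure theory of finite flat group schemes in the Fontaine--Laffaille/Breuil range rather than Raynaud's argument alone. Everything else is formal once one has matched the precise moduli problem of \cite{phimod} --- locally free Kisin modules inside the relevant affine-Grassmannian-type ambient space, together with its projectivity over $\Spec R^\fl$ --- with this functorial statement and with the topological surjectivity furnished by the Theorem.
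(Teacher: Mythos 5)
Your strategy matches the paper's at its core: work with Artinian local coefficients, use Raynaud's uniqueness for $e<p-1$ in one direction, and the freeness of the Kisin module over $A_W[[u]]$ in the other. The paper (Proposition~\ref{lowram}) runs this more directly: it shows $\widehat{C}_K(\rho)$ and $\Spf R^\fl$ both pro-represent $\Dcal^\fl_{\bar\rho}$, hence are isomorphic formal schemes, and algebraization yields the stronger statement that $C_K(\rho)\to\Spec R^\fl$ is an isomorphism, of which the corollary is an immediate consequence. Your packaging via scheme-theoretic image and Krull's intersection theorem also works but carries dead weight: once the structural map $\Spec(R^\fl/\mfrak^n)\to\Spec R^\fl$ lifts to $C_K(\rho)$, it automatically factors through the scheme-theoretic image $\Spec(R^\fl/I)$ (the smallest closed subscheme through which $C_K(\rho)\to\Spec R^\fl$ factors), so $I\subseteq\bigcap_n\mfrak^n=0$ follows without any appeal to the monomorphism/closed-immersion claim or to the nilradical observation extracted from Theorem~\ref{maintheo}. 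Note also that Raynaud only gives injectivity on Artinian points, not a scheme-theoretic monomorphism outright, so the ``proper monomorphism is a closed immersion'' shortcut needs more justification than you give it.

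The obstacle you flag --- producing a \emph{locally free} Kisin module over $A_W[[u]]$ from a flat deformation over Artinian $A$ --- is the genuine gap in your write-up, and your diagnosis of what fills it is off: no finer Fontaine--Laffaille/Breuil input is required. The paper first notes that Kisin's classification a priori gives only a $W[[u]]$-module, since the $A$-action on the generic fiber need not extend to the group scheme (a point you gloss over when you say the classification directly attaches an $A$-module). One then replaces this $W[[u]]$-module by its $A_W[[u]]$-span inside the associated \'etale $\phi$-module, and the $A_W[[u]]$-freeness of that span is supplied by the argument of \cite[Remark 4.4]{phimod}. The hypothesis $e<p-1$ enters through Raynaud's uniqueness of the finite flat model, which you already invoked; there is no further structure theory to import.
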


{\bf Acknowledgements:} I want to thank G. Pappas and M. Rapoport for their comments and remarks on a preliminary version of this note. This work was supported by the SFB/TR45 "Periods, Moduli Spaces and Arithmetic of Algebraic Varieties" of the DFG (German Research Foundation). 

\section{Notations}
Let $p$ be an odd prime and $K$ be a finite extension of $\Q_p$ with ring of integers $\Ocal_K$, uniformizer $\pi\in\Ocal_K$ and residue field $k=\Ocal_K/\pi\Ocal_K$. Denote by $K_0$ the maximal unramified extension of $\Q_p$ in $K$ and by $W=W(k)$ its ring of integers, the ring of Witt vectors with coefficients in $k$.
Fix an algebraic closure $\bar K$ of $K$ and denote by $G_K=\Gal(\bar K/K)$ the absolute Galois group of $K$. Further we choose a compatible system $\pi_n$ of $p^n$-th roots of the uniformizer $\pi$ in $\bar K$ and denote by $K_\infty$ the subfield $\bigcup K(\pi_n)$ of $\bar K$. We write $G_{K_\infty}=\Gal(\bar K/K_\infty)$ for its absolute Galois group.\\
Let $d>0$ be an integer and $\Fbb$ a finite field of characteristic $p$. Let $\bar\rho:G_K\rightarrow \GL_d(\Fbb)$ be a continuous representation of $G_K$ and denote by $\bar\rho_\infty=\bar\rho|_{G_{K_\infty}}$ the restriction of $\bar\rho$ to $G_{K_\infty}$.\\
We consider the deformation functors $\Dcal_{\bar\rho}$, $\Dcal^\fl_{\bar\rho}$ and $\Dcal_{\bar\rho_\infty}$ on local Artinian $W(\Fbb)$-algebras with residue field $\Fbb$.
For a local Artinian ring $(A,\mfrak)$ we have
\begin{align*}
\Dcal_{\bar\rho}(A)&=\left\{
{\begin{array}{*{20}c}
\text{equivalence classes of}\ \rho: G_K\rightarrow \GL_d(A)\ \text{such that}\\
\rho\hspace{-2mm}\mod\mfrak\cong \bar\rho
\end{array}}
\right\}
\\
\Dcal_{\bar\rho_\infty}(A)&=\left\{
{\begin{array}{*{20}c}
\text{equivalence classes of}\ \rho: G_{K_\infty}\rightarrow \GL_d(A)\ \text{such that}\\
\rho\hspace{-2mm}\mod\mfrak\cong \bar\rho_\infty
\end{array}}
\right\},
\end{align*}
where two lifts $\rho_1,\,\rho_2$ are said to be equivalent if they are conjugate under some $g\in\kernel(\GL_d(A)\rightarrow \GL_d(A/\mfrak))$.
The functor $\Dcal^\fl_{\bar\rho}$ is the flat deformation functor of Ramakrishna (cf. \cite{Rama}), i.e. the subfunctor of $\Dcal_{\bar\rho}$ consisting of all deformations that are (isomorphic to) the generic fiber of some finite flat group scheme over $\Spec\ \Ocal_K$.
Here "isomorphic to" means isomorphic as $\Z_p[G_K]$-modules, as the action of the coefficients in the generic fiber does not need to extend to the group scheme.\\
If $\Dcal_{\bar\rho}$ (resp. $\Dcal^\fl_{\bar\rho}$) are pro-representable, the pro-representing ring will be denoted by $R$ (resp. $R^\fl$).\\
Recall that $d>0$ denotes an integer and consider the following stacks on $\Z_p$-algebras, defined in \cite{phimod}.
For a $\Z_p$-algebra $R$, write $R_W$ for $R\otimes_{\Z_p}W$ and $R_W((u))$ (resp. $R_W[[u]]$) for $R\widehat{\otimes}_{\Z_p}W((u))$ (resp. $R\widehat{\otimes}_{\Z_p}W[[u]]$), where the competed tensor products are the completions for the $u$-adic topology. Further we denote by $\phi$ the endomorphism of $R_W((u))$ that is the identity on $R$, the Frobenius on $W$ and that maps $u$ to $u^p$.\\
We define an fpqc-stack $\Rcal$ on the category of $\Z_p$-schemes such that for a $\Z_p$-algebra $R$ the groupoid $\Rcal(R)$ is the groupoid of pairs $(M,\Phi)$, where $M$ is an $R_W((u))$-module that is fpqc-locally on $\Spec\ R$ free of rank $d$ as an $R_W((u))$-module, and $\Phi$ is an isomorphism $\phi^\ast M\longrightarrow M$.\\
Further we define a stack $\Ccal$ as follows. The $R$-valued points are pairs $(\Mfrak,\Phi)$, where $\Mfrak$ is a locally free $R_W[[u]]$-module of rank $d$ and $(\Mfrak[1/u],\Phi)\in\Rcal(R)$. For $m\in\Z$ consider the substacks $\Ccal_m\subset \Ccal$ given by pairs $(\Mfrak,\Phi)$ satisfying
\begin{equation}\label{stackCm}
u^m\Mfrak\subset \Phi(\phi^\ast \Mfrak)\subset u^{-m}\Mfrak.
\end{equation}
For $h\in\mathbb{N}$ we write $\Ccal_{h,K}$ for the substack of $\Ccal$ consisting of all $(\Mfrak,\Phi)$ satisfying
\[E(u)^h\Mfrak\subset \Phi(\phi^\ast \Mfrak)\subset \Mfrak.\]
Here $E(u)\in W[u]$ is the minimal polynomial of the uniformizer $\pi\in\Ocal_K$ over $K_0$.\\
In the following we will only consider the case $h=1$ and just write $\Ccal_K$ for $\Ccal_{1,K}$.\\
We will write $\widehat{\Ccal}_K$ resp. $\widehat{\Rcal}$ for the restrictions of the stacks $\Ccal_K$ (resp. $\Rcal$) to the category ${\rm Nil}_p$
of $\Z_p$-schemes on which $p$ is locally nilpotent. See also \cite[§2]{phimod} for the definitions.\\
The motivations for these definitions are the following equivalences of categories (see \cite{Fontaine} and \cite[§1]{Kisin}).
\begin{prop}\label{phimodul}
Let  $A$ be a local Artin ring with residue field $\Fbb$ a finite field of characteristic $p$.  Then the category of $G_{K_\infty}$-representations on free $A$-modules of rank $d$ is equivalent to the category of \'etale $\phi$-modules over $(A\otimes_{\Z_p}W)((u))$ that are free of rank $d$.
\end{prop}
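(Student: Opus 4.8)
The statement is the ``coefficient-ring'' version of Fontaine's classification of torsion representations of $G_{K_\infty}$, and the plan is to deduce it from the classical case (Fontaine's theory with torsion $\Z_p$-coefficients) by adjoining the coefficients $A$. First I would recall the classical statement (cf.\ \cite{Fontaine} and \cite[\S1]{Kisin}): writing $\Ocal_\Ecal$ for the $p$-adic completion of $W((u))$, a complete discrete valuation ring with uniformizer $p$ whose residue field is the field of norms $k((u))$ of the totally ramified extension $K_\infty/K$ (so that $G_{K_\infty}=\Gal(\bar K/K_\infty)$ gets identified with the absolute Galois group of $k((u))$), and writing $\Ocal_{\widehat{\Ecal}^{\,{\rm nr}}}$ for the $p$-adic completion of the ring of integers of the maximal unramified extension of $\Ocal_\Ecal[1/p]$ (residue field the separable closure of $k((u))$), the functors
\[
V\longmapsto D(V)=\bigl(\Ocal_{\widehat{\Ecal}^{\,{\rm nr}}}\otimes_{\Z_p}V\bigr)^{G_{K_\infty}},\qquad
M\longmapsto\bigl(\Ocal_{\widehat{\Ecal}^{\,{\rm nr}}}\otimes_{\Ocal_\Ecal}M\bigr)^{\phi=1}
\]
are mutually quasi-inverse equivalences between torsion $\Z_p[G_{K_\infty}]$-modules (finite, with continuous action) and torsion \'etale $\phi$-modules over $\Ocal_\Ecal$; the inputs are $(\Ocal_{\widehat{\Ecal}^{\,{\rm nr}}})^{G_{K_\infty}}=\Ocal_\Ecal$, $(\Ocal_{\widehat{\Ecal}^{\,{\rm nr}}})^{\phi=1}=\Z_p$, and the acyclicity of $\Ocal_{\widehat{\Ecal}^{\,{\rm nr}}}$ for $G_{K_\infty}$ and for $\phi-1$ (a Hilbert~90 statement over the separably closed residue field, plus d\'evissage along the $p$-power filtration).

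Then, given $A$ as in the statement (so $p^NA=0$ for some $N$ and $A$ is finite over $\Z_p$; hence $A\widehat{\otimes}_{\Z_p}W((u))=(A\otimes_{\Z_p}W)((u))=A\otimes_{\Z_p}\Ocal_\Ecal$, with $\phi=\id_A\otimes\phi_{\Ocal_\Ecal}$), I would observe that a $G_{K_\infty}$-representation on a finite $A$-module $V$ is in particular a torsion $\Z_p[G_{K_\infty}]$-module on which $A$ acts $G_{K_\infty}$-equivariantly, so $A$ acts on $D(V)$ through endomorphisms of \'etale $\phi$-modules, i.e.\ $D(V)$ is a module over $A\otimes_{\Z_p}\Ocal_\Ecal=(A\otimes_{\Z_p}W)((u))$ with its $\phi$; conversely an \'etale $\phi$-module over $(A\otimes_{\Z_p}W)((u))$ has an underlying torsion \'etale $\phi$-module over $\Ocal_\Ecal$ with a commuting $A$-action, which the quasi-inverse of $D$ turns back into a $G_{K_\infty}$-representation on a finite $A$-module. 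This upgrades the classical equivalence to one between $G_{K_\infty}$-representations on finite $A$-modules and finitely generated \'etale $\phi$-modules over $(A\otimes_{\Z_p}W)((u))$.

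The remaining point is to match the freeness conditions: that $V$ is free of rank $d$ over $A$ iff $D(V)$ is free of rank $d$ over $(A\otimes_{\Z_p}W)((u))$. If $V\cong A^{\oplus d}$, then $\Ocal_{\widehat{\Ecal}^{\,{\rm nr}}}\otimes_{\Z_p}V$ is free of rank $d$ over $\Ocal_{\widehat{\Ecal}^{\,{\rm nr}}}\otimes_{\Z_p}A$ with a semilinear $G_{K_\infty}$-action, and running Fontaine's d\'evissage with the coefficients $A$ in place --- the $p$-power filtration of $\Ocal_{\widehat{\Ecal}^{\,{\rm nr}}}\otimes_{\Z_p}A$ and the vanishing of $H^1(G_{K_\infty},\GL_d(\Ocal_{\widehat{\Ecal}^{\,{\rm nr}}}\otimes_{\Z_p}A))$, which reduces to Hilbert~90 over the separably closed residue field --- shows this descends along $A\otimes_{\Z_p}\Ocal_\Ecal\to\Ocal_{\widehat{\Ecal}^{\,{\rm nr}}}\otimes_{\Z_p}A$ to $D(V)$, so $D(V)$ is free of rank $d$; the reverse implication is symmetric, with $\phi$-invariants replacing $G_{K_\infty}$-invariants and $(\Ocal_{\widehat{\Ecal}^{\,{\rm nr}}})^{\phi=1}=\Z_p$. (More compactly: $D$ is an exact tensor equivalence taking the trivial representation $A$, a commutative ring object, to the ring object $(A\otimes_{\Z_p}W)((u))$; one then passes to modules over these, and to the free rank-$d$ ones.)

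I expect no serious obstacle --- the whole argument is the routine ``addition of coefficients'' to Fontaine's theorem. The one step calling for genuine care is the freeness statement of the previous paragraph, i.e.\ rerunning the Hilbert~90 / acyclicity argument for $\Ocal_{\widehat{\Ecal}^{\,{\rm nr}}}$ with $A$ present; and one should keep in mind that $(A\otimes_{\Z_p}W)((u))$ is in general only semilocal (it is a finite product of fields as soon as $p$ kills $V$), so ``free of rank $d$'' must be read over that ring.
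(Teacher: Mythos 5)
The paper gives no proof here; it simply cites \cite{Fontaine} and \cite[\S1]{Kisin}, and your argument is precisely the standard derivation one finds in \cite[Lemma 1.2.7]{Kisin} --- base Fontaine's equivalence over $\Ocal_\Ecal$ to $A$-coefficients (using that $A$ is finite and $p$-power torsion, so $A\widehat{\otimes}_{\Z_p}W((u))=(A\otimes_{\Z_p}W)((u))=A\otimes_{\Z_p}\Ocal_\Ecal$) and match freeness by d\'evissage together with Hilbert 90 over the separably closed residue field --- so the approach agrees with what the paper intends. The one thing you should discard is the parenthetical ``more compactly'' reformulation: a tensor equivalence carrying the ring object $A$ to $(A\otimes_{\Z_p}W)((u))$ does not by itself preserve ``free of rank $d$ over the underlying ring,'' since a $G_{K_\infty}$-representation that is free of rank $d$ over $A$ is generally not isomorphic to $A^{\oplus d}$ \emph{as a representation}; the d\'evissage/Hilbert~90 argument in the preceding paragraph is what actually carries the freeness across and should not be treated as dispensable.
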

\begin{theo}\label{grpschemeclass}
Let $p>2$. Then there is an equivalence between the groupoid of finite flat group schemes $\Gcal$ over $\Spec\ \Ocal_K$ and the groupoid of pairs $(\Mfrak,\Phi)$, where $\Mfrak$ is a $W[[u]]$-module of projective dimension $1$ and $\Phi:\Mfrak\rightarrow \Mfrak$ is a $\phi$-linear map such that the cokernel of the linearisation of $\Phi$ is killed by $E(u)$. Under this equivalence the restriction of the Tate twist of the $G_K$-representation on $\Gcal(\bar K)$ to $G_{K_\infty}$ corresponds to the \'etale $\phi$-module $(\Mfrak[1/u],\Phi)$.   
\end{theo}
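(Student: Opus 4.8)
\textbf{Proof strategy for Theorem~\ref{grpschemeclass}.}
The plan is to reduce the statement to the Breuil--Kisin classification of finite flat group schemes over $\Ocal_K$ by $\phi$-modules over the ring $\mathfrak{S}=W[[u]]$. First I would recall the classical input: by the theory of Breuil modules (for $p>2$) together with Kisin's reformulation in \cite[�2]{Kisin}, the category of finite flat group schemes killed by a power of $p$ over $\Spec\ \Ocal_K$ is anti-equivalent (or equivalent, after fixing conventions on Cartier duality and Tate twists) to the category of finite $\mathfrak{S}$-modules of projective dimension $\leq 1$ equipped with a $\phi$-linear map $\Phi$ whose linearisation $\mathfrak{S}\otimes_{\phi,\mathfrak{S}}\Mfrak\to\Mfrak$ has cokernel annihilated by $E(u)$. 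One shows that a module of finite length and projective dimension $1$ over the two-dimensional regular local ring $\mathfrak{S}$ admits a two-term free resolution, and that the condition on the cokernel of $\Phi$ is exactly the ``height $\leq 1$'' condition that corresponds to the group scheme being finite flat. The heart of the argument is contained in \cite{Kisin}; the task here is to verify that the formulation used in \cite{phimod} (projective dimension $1$, cokernel of the linearisation killed by $E(u)$) matches Kisin's, which is essentially a bookkeeping exercise.

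Next I would address the Galois-theoretic compatibility. Given a finite flat group scheme $\Gcal$, one forms the associated $\phi$-module $(\Mfrak,\Phi)$, inverts $u$, and invokes Proposition~\ref{phimodul} (really its analogue for finite length modules, or rather Fontaine's equivalence applied to the $\mathfrak{o}_{\mathcal{E}}$-module $\Mfrak[1/u]$) to get a $G_{K_\infty}$-representation. The claim is that this representation is the restriction to $G_{K_\infty}$ of the Tate twist of $\Gcal(\bar K)$. I would prove this by a compatibility of the period map: there is a functorial isomorphism of $G_{K_\infty}$-modules between $\Gcal(\bar K)(1)$ and $\Hom_{\phi}(\Mfrak,\Ocal_{\mathcal{E}^{ur}}/\cdots)$ coming from Fontaine's construction, which is standard and proved in \cite[Cor.~2.2]{Kisin} (or Breuil). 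Again the work is to match normalisations: the twist by the cyclotomic character appears because $E(u)$ divides the cokernel of $\Phi$ rather than $\Phi$ being an isomorphism, so one has to track a power of $u$ versus the action on roots of unity along the tower $K_\infty/K$.

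To upgrade the bijection on objects to an equivalence of groupoids I would check that morphisms of finite flat group schemes correspond to morphisms of pairs $(\Mfrak,\Phi)$: this is immediate from functoriality of the classical constructions, and faithfulness follows because $\Gcal\mapsto \Gcal(\bar K)$ is faithful and the $\phi$-module determines the generic fibre by Fontaine's equivalence, while fullness follows by the same token combined with the fact that a morphism of group schemes over $\Ocal_K$ is determined by its generic fibre (since $\Ocal_K$ is a discrete valuation ring and the schemes are flat). Essential surjectivity is exactly the statement that every such pair arises from a group scheme, which is the Breuil--Kisin existence theorem. I would organise the proof as: (1) cite the Breuil--Kisin classification, (2) reconcile the two sets of axioms, (3) establish the Galois compatibility with the correct Tate twist, (4) promote to groupoids.

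The main obstacle, as I see it, is purely one of normalisation and references rather than new mathematics: pinning down exactly which contravariant or covariant functor is meant, which Tate twist sign is correct, and ensuring the ``projective dimension $1$'' hypothesis in the $\mathfrak{S}$-module is genuinely equivalent to Kisin's condition (he often works with the category of $\mathfrak{S}$-modules of finite $E(u)$-height, or with the limit over $p^n$-torsion objects). A subtlety worth flagging is that the equivalence is only claimed for $p>2$; the restriction enters through Breuil's theory, where the case $p=2$ requires extra data, so I would not attempt to remove it. Beyond that, the proof is a matter of carefully assembling results from \cite{Fontaine}, \cite{Kisin} and \cite{phimod}.
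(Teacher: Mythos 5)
The paper offers no proof of this theorem: it is quoted as a known result, with the citations to Fontaine and Kisin serving in place of an argument. Your proposal---deducing the statement from the Breuil--Kisin classification, Fontaine's equivalence for \'etale $\phi$-modules, and a check of normalisations and Tate twists---is precisely the justification the paper implicitly relies on, so it takes essentially the same approach.
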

Further we will use the following notations: Let $(A,\mfrak)$ be a complete noetherian $W(\Fbb)$-algebra and $\xi:\Spf\ A\rightarrow \widehat{\Rcal}$ be an $A$-valued point of $\widehat{\Rcal}$. Write $\xi_n$ for the reduction of $\xi$ modulo $\mfrak^{n+1}$. By \cite[Corollary 2.6; 3.b]{phimod} the fiber product
\[\Spec(A/\mfrak^{n+1})\times_\Rcal\Ccal_K\]
is representable by a projective $A/\mfrak^{n+1}$-scheme $C_K(\xi_n)$ that is a closed subscheme of some affine Grassmannian over $\Spec(A/\mfrak^{n+1})$ for all $n\geq 0$. These schemes give rise to a formal scheme $\widehat{C}_K(\xi)$ over $\Spf\ A$. Using the very ample line bundle on the affine Grassmannian this formal scheme is algebraizable. The resulting projective scheme over $\Spec\ A$ will be denoted by $C_K(\xi)$.
\begin{rem}\label{charpok}
Note that this does not give an arrow $C_K(\xi)\rightarrow \Ccal_K$. For example the module $\Mfrak=W[[u]]$ together with the $\phi$-linear map $\Phi$ given by $\Phi(1)=E(u)$ does not define a $\Z_p$-valued point of $\Ccal_K$ but rather a "formal" point 
\[\Spf\ \Z_p\rightarrow \widehat{\Ccal}_K.\]
However if $B$ is some $\Z_p$-algebra killed by some power of $p$, then 
\[E(u)\in (B\widehat{\otimes}_{\Z_p}W((u)))^\times,\] and hence any locally free 
$B\widehat{\otimes}_{\Z_p}W[[u]]$-modules $\Mfrak$ with semi-linear map $\Phi$ satisfying 
\[E(u)\Mfrak\subset \Phi(\phi^\ast\Mfrak)\subset \Mfrak\]
defines a $B$-valued point of $\Ccal_K$.
\end{rem}

\section{The image of the coefficient space}

In the following we will assume that the representation $\bar\rho$ is flat (i.e. is the generic fiber of some finite flat group scheme over $\Spec\ \Ocal_K$) and that $\Dcal_{\bar\rho}^\fl$ is representable. This is the case if, for example, $\Endo_{\Fbb}(\bar\rho)=\Fbb$ (cf. \cite[Theorem 2.3]{Conrad}).
We write $\rho$ for the universal flat deformation.\\
By Proposition $\ref{phimodul}$ we have a map
\begin{equation}\label{deformequiv}
\Dcal_{\bar\rho_\infty}\longrightarrow \widehat{\Rcal},\end{equation}
see also \cite[4.a]{phimod} and \cite[1.2.6, 1.2.7]{Kisin}.
For some local Artinian ring $A$ and some $\xi\in\Dcal_{\bar\rho_\infty}(A)$ we write $M(\xi)\in\widehat{\Rcal}(A)$ for the corresponding $\Phi$-module. More precisely, this map identifies $\Dcal_{\bar\rho_\infty}$ with $\widehat{\Rcal}_{[\bar\rho_\infty]}$ (cf. \cite[4.a]{phimod}).
The latter functor is given by all deformations in $\widehat{\Rcal}$ of the $\Phi$-module $M(\bar\rho_\infty)$. Especially we find that the map in $(\ref{deformequiv})$ is formally smooth.
\begin{lem}\label{lemcartesian}
The restriction of $\rho$ to $G_{K_\infty}$ induces a map $\Spf\ R^\fl\rightarrow \Dcal_{\bar\rho_\infty}$. Composing the canonical projection $\widehat{C}_K(\rho)\rightarrow \Spf\ R^\fl$ with this morphism we obtain a $2$-cartesian diagram of stacks on local Artinian $W(\Fbb)$-algebras:
\[
\begin{xy}
\xymatrix{
\widehat{C}_K(\rho) \ar[r]\ar[d]  & \widehat{\Ccal}_K  \ar[d]\\
\Dcal_{\bar\rho_\infty} \ar[r] & \widehat{\Rcal}.
}
\end{xy}
\] 
\end{lem}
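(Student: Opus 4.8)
The plan is to reduce the lemma to a formal manipulation of fibre products, fed by two inputs from Kisin's theory. Recall first that, by its very construction, the scheme $C_K(\rho_n)$ (with $\rho_n=\rho\bmod\mfrak^{n+1}$) is the fibre product $\Spec(R^\fl/\mfrak^{n+1})\times_\Rcal\Ccal_K$, where $\Spec(R^\fl/\mfrak^{n+1})\to\Rcal$ is the morphism attached to $\rho_n|_{G_{K_\infty}}$ via Proposition \ref{phimodul}. Since $R^\fl/\mfrak^{n+1}$ lies in ${\rm Nil}_p$, this fibre product may equally be formed over $\widehat{\Rcal}$ with $\widehat{\Ccal}_K$. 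Passing to the colimit over $n$ and evaluating on a local Artinian $W(\Fbb)$-algebra $B$ — a continuous homomorphism $R^\fl\to B$ factors through some $R^\fl/\mfrak^{n+1}$, and filtered colimits of sets commute with finite limits — one obtains, functorially in $B$,
\[
\widehat{C}_K(\rho)(B)\;=\;\Spf\ R^\fl(B)\times_{\widehat{\Rcal}(B)}\widehat{\Ccal}_K(B)\;=\;\Dcal^\fl_{\bar\rho}(B)\times_{\widehat{\Rcal}(B)}\widehat{\Ccal}_K(B),
\]
using that $R^\fl$ pro-represents $\Dcal^\fl_{\bar\rho}$ and that the resulting map $\Dcal^\fl_{\bar\rho}\to\widehat{\Rcal}$ — sending a flat deformation to the $\Phi$-module of its restriction to $G_{K_\infty}$ — factors through $\Dcal_{\bar\rho_\infty}$, the composite $\widehat{C}_K(\rho)\to\Spf\ R^\fl\to\Dcal_{\bar\rho_\infty}$ being exactly the left vertical map of the diagram. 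I would also record here that the $2$-fibre products occurring are discrete: an automorphism of an object $(\Mfrak,\Phi)$ of $\widehat{\Ccal}_K$ which becomes the identity in $\widehat{\Rcal}$ is the identity, since $\Mfrak\hookrightarrow\Mfrak[1/u]$; this is what makes the displayed identification consistent with $\widehat{C}_K(\rho)$ being an (algebraizable) formal scheme.

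Because $\Spf\ R^\fl\to\widehat{\Rcal}$ factors through $\Dcal_{\bar\rho_\infty}$, the first paragraph produces a natural map $\widehat{C}_K(\rho)\to\Dcal_{\bar\rho_\infty}\times_{\widehat{\Rcal}}\widehat{\Ccal}_K$, and it remains to show it is an equivalence; on $B$-points this is the bijectivity of the map $\Dcal^\fl_{\bar\rho}(B)\times_{\widehat{\Rcal}(B)}\widehat{\Ccal}_K(B)\to\Dcal_{\bar\rho_\infty}(B)\times_{\widehat{\Rcal}(B)}\widehat{\Ccal}_K(B)$ induced by restriction to $G_{K_\infty}$. Injectivity reduces to the injectivity of $\Dcal^\fl_{\bar\rho}(B)\to\Dcal_{\bar\rho_\infty}(B)$, which follows from Kisin's theorem that, for $p>2$, restriction to $G_{K_\infty}$ is fully faithful on the category of flat representations: a $B$-linear $G_{K_\infty}$-isomorphism reducing to the identity modulo $\mfrak$ has a unique $G_K$-equivariant extension, which is automatically $B$-linear (restriction being $B\otimes_{\Z_p}B$-linear) and reduces to the identity (an endomorphism of $\bar\rho$ trivial on $G_{K_\infty}$ is trivial). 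For surjectivity I would start from a triple $(\eta,(\Mfrak,\Phi),\iota)$, with $\eta\in\Dcal_{\bar\rho_\infty}(B)$ and $\iota$ an isomorphism of $\Mfrak[1/u]$ with the $\Phi$-module attached to $\eta$. Over $W[[u]]$ the module $\Mfrak$ has projective dimension $\leq 1$ (being a direct summand of a free $B\widehat{\otimes}_{\Z_p}W[[u]]$-module, and $B$ having projective dimension $\leq 1$ over $\Z_p$), and the inclusions $E(u)\Mfrak\subset\Phi(\phi^\ast\Mfrak)\subset\Mfrak$ defining $\Ccal_K$ say exactly that the cokernel of the linearisation of $\Phi$ is killed by $E(u)$; hence Theorem \ref{grpschemeclass} attaches to $(\Mfrak,\Phi)$ a finite flat group scheme $\Gcal$ over $\Spec\ \Ocal_K$ carrying the $B$-action induced by that on $\Mfrak$. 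By the compatibility in that theorem — the Tate twist occurring there being matched by the normalisation of $\Dcal_{\bar\rho_\infty}\to\widehat{\Rcal}$, as in \cite{Kisin} — the generic fibre $\Gcal(\bar K)$, with this $B$-action, is free of rank $d$ over $B$ and restricts over $G_{K_\infty}$ to $\eta$; and it is flat, being the generic fibre of a finite flat group scheme. Reducing modulo $\mfrak$ and applying the full faithfulness once more (both $\Gcal(\bar K)\otimes_B\Fbb$ and $\bar\rho$ are flat and agree over $G_{K_\infty}$) identifies the reduction with $\bar\rho$, so $\Gcal(\bar K)$ is a flat deformation of $\bar\rho$ restricting to $\eta$: the required preimage.

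I expect the surjectivity step to be the main obstacle, and within it the two borrowed facts: the full faithfulness of restriction to $G_{K_\infty}$ on flat representations (used for uniqueness in the injectivity step and again to recognise the reduction), and the compatibility of the correspondence of Theorem \ref{grpschemeclass} with a non-trivial coefficient ring $B$ and with the Tate twist — both available from \cite{Kisin} and underlying \cite{phimod}. The Tate twist itself is mere bookkeeping, but it must be carried consistently through the identification of $\Dcal_{\bar\rho_\infty}$ with $\widehat{\Rcal}_{[\bar\rho_\infty]}$; everything else is the formal fibre-product computation of the first paragraph.
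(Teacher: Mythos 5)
Your proof is correct and, at bottom, follows the same strategy as the paper: reduce the $2$-cartesianness to showing that restriction to $G_{K_\infty}$ induces an equivalence $\Dcal^\fl_{\bar\rho}\times_{\widehat{\Rcal}}\widehat{\Ccal}_K\to\Dcal_{\bar\rho_\infty}\times_{\widehat{\Rcal}}\widehat{\Ccal}_K$, the substantive direction being essential surjectivity. The one real difference is granularity. The paper treats surjectivity as a one-line citation of \cite[Proposition 4.3]{phimod}, which already packages the statement that an $A$-valued point of $\Ccal_K$ lying over an \'etale $\phi$-module $M(\xi)$ yields a flat deformation of $\bar\rho$ restricting to $\xi$. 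You reconstruct this from its two constituents: Kisin's classification of finite flat group schemes (Theorem \ref{grpschemeclass}), for which you correctly observe that $\Mfrak$ has $W[[u]]$-projective dimension $\leq 1$ because $B$ does over $\Z_p$, and the full faithfulness of restriction to $G_{K_\infty}$ on flat $p$-torsion $G_K$-representations (\cite[Theorem 3.4.3]{Breuil}), which you also deploy for the full-faithfulness direction that the paper dismisses as clear. Your unpacking buys nothing new mathematically, since these are exactly the ingredients behind the cited proposition, but it does make transparent precisely which facts from integral $p$-adic Hodge theory the lemma leans on, and it correctly isolates full faithfulness as what is needed both for the discreteness/injectivity side and for recognizing the reduction modulo $\mfrak_B$ as $\bar\rho$. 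Your remark that the $2$-fibre products are discrete, because an automorphism of $(\Mfrak,\Phi)$ killing $\Mfrak[1/u]$ is the identity, is a worthwhile observation that the paper leaves implicit in treating $\widehat{C}_K(\rho)$ as a formal scheme.
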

\begin{proof}
Let $A$ be a local Artinian $W(\Fbb)$-algebra such that $p^nA=0$ for some $n>0$. We have to show that there is a natural equivalence of categories
\[\widehat{C}_K(\rho)(A)\rightarrow \left\{
{\begin{array}{*{20}c}
((\Mfrak,\Phi),\xi,\alpha)\ \text{with}\  (\Mfrak,\Phi)\in\Ccal_K(A)\ ,\ \xi\in\Dcal_{\bar\rho_\infty}(A)\\
\text{and an isomorphism}\ \alpha:(\Mfrak[\tfrac{1}{u}],\Phi)\longrightarrow M(\xi)
\end{array}}\right\}.
\]
First it is clear that $\Spf\ R\rightarrow \Dcal_{\bar\rho_\infty}$ induces a natural map from the left to the right which is fully faithful. We have to show that it is essentially surjective.\\
Let $x=((\Mfrak,\Phi),\xi,\alpha)$ be an $A$-valued point of the right hand side. \\Then $(\Mfrak,\Phi)\in\Ccal_K(A)$ and by \cite[Proposition 4.3]{phimod} there is an associated flat representation $\widetilde{\xi}$ of $G_K$ such that 
\[\beta:(\Mfrak[1/u],\Phi)\longrightarrow\hspace{-5mm}^{=}\hspace{3mm}M(\widetilde{\xi}|_{G_{K_\infty}}).
\]
This shows that $y=((\Mfrak,\Phi),\widetilde{\xi},\beta)$ defines a unique point in $\widehat{C}_K(\rho)(A)$. 
It follows from the construction that this point maps to $x$. 
\end{proof}
\begin{rem}
Note that it is not clear whether $\Dcal_{\bar\rho_\infty}$ is representable, even if $\bar\rho_\infty$ is absolutely irreducible, since $G_{K_\infty}$ does not satisfy Mazur's $p$-finiteness condition (cf. \cite[§1. Definition]{Mazur}). As there is an isomorphism
\[G_{K_\infty}=\Gal(\bar K/K_\infty)\cong \Gal(k((u))^{\rm sep}/k((u))),\]
each open subgroup of finite index $H\subset G_{K_\infty}$ is isomorphic to the absolute Galois group of some local field in characteristic $p$,
\[H\cong \Gal(l((t))^{\rm sep}/l((t))),\]
where $l$ is a finite extension of $k$ and $t$ is an indeterminate. Hence by Artin-Schreier theory (cf. \cite[X §3.a]{Serre} for example),
there is an isomorphism 
\[\Hom_{\rm cont}(H,\Z/p\Z)\cong l((t))/\wp(l((t))),\]
and the latter group is infinite.\\
If one restricts the attention to $G_{K_\infty}$-representations of $E$-height $\leq h$ , then the deformation functor $\Dcal_{\bar\rho_\infty}^{\leq h}$ is representable if $\Endo_\Fbb(\bar\rho_\infty)=\Fbb$ (see \cite[Theorem 11.1.2]{Kim}). The $E$-height of a $p$-torsion $G_{K_\infty}$-representation is defined as the minimal $h$ such that the \'etale $\phi$-modules associated to the representation admits an $W[[u]]$-lattice with cokernel of the linearisation of $\Phi$ killed by $E(u)^h$ (see \cite[Definition 5.2.8]{Kim} for the precise definition).
\end{rem}
\begin{prop}\label{localstructure}
Let $C_K(\rho)$ denote the projective $R^\fl$-scheme obtained from $\widehat{C}_K(\rho)$ by algebraization.
Then $C_K(\rho)\otimes_{W(\Fbb)}W(\Fbb)[1/p]$ is reduced, normal and Cohen-Macaulay. The reduced subscheme underlying the special fiber $C_K(\rho)\otimes_{W(\Fbb)}\Fbb$ is normal and with at most rational singularities. Further the scheme $C_K(\rho)$ is topologically flat, i.e. its generic fiber is dense.
\end{prop}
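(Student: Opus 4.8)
The plan is to reduce all three assertions to standard properties of the \emph{local models} attached to affine Grassmannians, by exhibiting a local model diagram for $C_K(\rho)$. Recall that $C_K(\rho)$ is a closed subscheme of an affine Grassmannian over $\Spec\ R^\fl$, cut out by the height-one condition $E(u)\Mfrak\subset\Phi(\phi^\ast\Mfrak)\subset\Mfrak$, and that this condition depends only on the relative position of the lattices $\Mfrak$ and $\Phi(\phi^\ast\Mfrak)$ inside $\Mfrak[1/u]$. The first step is to prove, following Kisin (\cite[1.3, 2.3]{Kisin}), that the assignment carrying $(\Mfrak,\Phi)$ to this relative position is formally smooth; this rests on the fact that $\phi$ raises the $u$-adic valuation (``contracting Frobenius''). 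Globalising the argument over the formally smooth stack $\widehat{\Rcal}$ --- via Lemma~\ref{lemcartesian} and the identification of $\Dcal_{\bar\rho_\infty}$ with $\widehat{\Rcal}_{[\bar\rho_\infty]}$, working with the formal scheme $\widehat{C}_K(\rho)$ --- produces a diagram
\[
C_K(\rho)\ \xleftarrow{\ \alpha\ }\ \widetilde{C}\ \xrightarrow{\ \beta\ }\ M^{\mathrm{loc}},
\]
in which $\alpha$ is a torsor under a smooth affine group scheme (hence smooth and surjective), $\beta$ is smooth, and $M^{\mathrm{loc}}$ is the local model over $W(\Fbb)$ parametrising lattices $\Lcal$ with $E(u)\Lambda\subset\Lcal\subset\Lambda$, where $\Lambda$ is the standard lattice. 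The crucial point is that the Frobenius datum encoding $\rho$ is absorbed entirely into $\alpha$ and $\beta$, so that $M^{\mathrm{loc}}$ does not depend on $\rho$.

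Reducedness, normality, Cohen--Macaulayness and the property of having at worst rational singularities are local for the smooth topology: they ascend along $\beta$ and descend along the faithfully flat map $\alpha$, hence are shared by $C_K(\rho)$ and $M^{\mathrm{loc}}$, and the same holds after base change to $W(\Fbb)[1/p]$ or to $\Fbb$ and after passage to reductions; likewise topological flatness (density of the generic fibre over $\Spec\ W(\Fbb)$) is inherited by $C_K(\rho)$ from $M^{\mathrm{loc}}$, since $\alpha,\beta$ are flat and $\alpha$ is surjective. Thus it suffices to prove: (i) $M^{\mathrm{loc}}\otimes_{W(\Fbb)}W(\Fbb)[1/p]$ is smooth over $W(\Fbb)[1/p]$, in particular reduced, normal and Cohen--Macaulay; (ii) the reduced subscheme of $M^{\mathrm{loc}}\otimes_{W(\Fbb)}\Fbb$ is normal with at worst rational singularities; (iii) $M^{\mathrm{loc}}$ is topologically flat over $W(\Fbb)$. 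For (i), after inverting $p$ the algebra $W[[u]]/E(u)\cong\Ocal_K$ becomes finite \'etale over $W(\Fbb)[1/p]$, so that after this \'etale base change $M^{\mathrm{loc}}\otimes W(\Fbb)[1/p]$ is a disjoint union of products of Grassmannians, hence smooth. For (ii) and (iii), one uses that $E$ is Eisenstein, so $E(u)\equiv u^e\bmod p$; hence the underlying topological space of $M^{\mathrm{loc}}\otimes\Fbb$ is the locus $\{u^e\Lambda\subset\Lcal\subset\Lambda\}$, a union of affine Schubert varieties for $\GL_d$. Such varieties are normal, Cohen--Macaulay and have at worst rational singularities by the theory of affine Schubert varieties (Faltings, Pappas--Rapoport), which gives (ii); and flatness of the local model in this minuscule situation --- a theorem of G\"ortz and Pappas--Rapoport --- gives (iii).

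Transporting (i)--(iii) back along the diagram yields the Proposition. The step requiring the most care --- and the one I expect to be the main obstacle --- is the construction of the local model diagram, i.e.\ carrying out Kisin's contracting-Frobenius argument precisely enough that the Frobenius data attached to $\rho$ is completely absorbed into the two smooth morphisms; once this is available, the remaining inputs (smoothness of the generic fibre of the local model, flatness of the local model, and normality, Cohen--Macaulayness and rationality of singularities of affine Schubert varieties) are deep but standard and can be imported as black boxes. A secondary, routine point is to identify exactly which affine Schubert varieties occur in the special fibre of $M^{\mathrm{loc}}$, so that the cited statements apply.
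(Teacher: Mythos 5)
Your proposal is essentially the approach the paper takes, and it is correct in broad outline: reduce via a local model diagram to the (naive) local model $M_K$ for $\Res_{K/\Q_p}\GL_d$ and the minuscule bound, then import the relevant algebro-geometric statements. A few points of comparison and caution are worth recording.

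First, you present the construction of the local model diagram as the main obstacle, proposing to carry out Kisin's contracting-Frobenius argument yourself. The paper does no such work: it simply cites \cite[Theorem 0.1]{phimod} for the diagram
\[
\Ccal_K \xleftarrow{\ \pi\ } \widetilde{\Ccal_K} \xrightarrow{\ \phi\ } M_K
\]
with $\pi$ and $\phi$ formally smooth, and then combines this with Lemma~\ref{lemcartesian} and the formal smoothness of $\Dcal_{\bar\rho_\infty}\to\widehat\Rcal$ (i.e.\ of $\xi$, hence of the induced $\xi'$) to compare the completed local ring $\widehat\Ocal_{C_K(\rho),x}$ with the completed local ring of $M_K$ at the image point $\bar x$. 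You would save real effort by citing the diagram rather than reproving it; what you do have to supply, and what the paper does supply, is precisely the deformation-theoretic comparison of $C_K(\rho)$ (not just $\Ccal_K$) with $M_K$, using Lemma~\ref{lemcartesian}.

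Second, your description of the inputs at the level of the local model agrees with the paper's. After inverting $p$, the paper writes $M_K\otimes\Q_p=\coprod_{\mu_i}M^{\rm loc}_{\mu_i,K}\otimes\Q_p$ with each $\mu_i$ minuscule and invokes \cite[Theorem 5.4]{lokmod} to get reduced, normal, Cohen--Macaulay; your observation that this generic fibre is in fact smooth (a disjoint union of products of Grassmannians after an \'etale base change) is true and a little stronger than what the paper states, but it is not needed. Mod $p$, the paper decomposes the special fibre by the cocharacter $\nu=\det\circ\mu$ and again applies \cite[Theorem 5.4]{lokmod}; your description via affine Schubert varieties for $\GL_d$ (using $E(u)\equiv u^e\bmod p$) is the same picture in different words.

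The one place where your wording has a genuine imprecision is item (iii), topological flatness. You appeal to ``flatness of the local model in this minuscule situation --- a theorem of G\"ortz and Pappas--Rapoport.'' But the object $M_K$ in the local model diagram is the \emph{naive} local model allowing all ranks, and it is \emph{not} flat in general: as the paper remarks, the $M^{\rm loc}_{\mu,K}$ live over ramified extensions of $\Z_p$, and the generic and special fibres of $M_K$ have differently indexed component decompositions. What the flatness theorems give you is flatness of each individual $M^{\rm loc}_{\mu,K}$; to get topological flatness of $M_K$ you must then observe, as the paper implicitly does, that every component $M^{\rm loc}_{\mu_{\max}(\nu),K}\otimes\Fbb_p$ of the reduced special fibre lies in the closure of the generic-fibre component $M^{\rm loc}_{\mu_{\max}(\nu),K}\otimes\Q_p$. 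Without this matching step the conclusion does not follow from flatness of the bounded local models alone; with it, your argument closes up and coincides with the paper's.

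Finally, a small bookkeeping point: when you say ``descend along the faithfully flat map $\alpha$,'' you should be explicit that you are working with the completed local rings of $C_K(\rho)$, $\widetilde C$, $M_K$ at compatible closed points, as the paper does, since the local model diagram is a diagram of stacks/formal schemes rather than of the algebraized scheme $C_K(\rho)$ itself.
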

\begin{proof}
This is similar to \cite[Proposition 2.4.6]{Kisin}.\\
Denote by $y:\Spec\ \Fbb\rightarrow \Rcal$ the $\Fbb$-valued point defined by $\bar\rho_\infty$. Let $x$ be a closed point of $C_K(\rho)$.
Extending scalars if necessary, we may assume that $x$ is defined over $\Fbb$. Denote by $(M_0,\Phi_0)\in\Rcal(\Fbb)$ the $\Phi$-module defined by $y$
and by $(\Mfrak_0,\Phi_0)\in\Ccal_K(\Fbb)$ the $\Phi$-module defined by $x$. We want to compare the structure of the local ring $\Ocal_{C_K(\rho),x}$ (resp. its completion) to the structure of a local model $M_K$ defined in \cite[3.a]{phimod}. By loc. cit. Theorem 0.1. there is a "local model"-diagram
\begin{equation}\label{localmodel}
\begin{aligned}
\begin{xy}\xymatrix{
&\widetilde{\Ccal_K} \ar[ld]_\pi\ar[rd]^\phi & \\
\Ccal_K & & M_K,
}\end{xy}
\end{aligned}
\end{equation}
with $\pi$ and $\phi$ formally smooth. Here the $B$-valued points of the stack $\widetilde{\Ccal_K}$ are the $\Phi$-modules $(\Mfrak,\Phi)\in\Ccal_K(B)$ together with an isomorphism $\Mfrak\rightarrow (B\widehat{\otimes}_{\Z_p}W[[u]])^d$, for a $\Z_p$-algebra $B$.\\
We consider the following groupoids on local Artinian $W(\Fbb)$-algebras: Denote by $\Dcal_x$ and $\Dcal_y$ the groupoids
\begin{align*}
\Dcal_x(B)&=\left\{
{\begin{array}{*{20}c}
(\Mfrak,\Phi)\in\Ccal_K(B)\ \text{such that}\\
(\Mfrak\otimes_B (B/\mfrak_B),\Phi\otimes\id)\cong (\Mfrak_0\otimes_\Fbb(B/\mfrak_B),\Phi_0\otimes\id)
\end{array}}\right\},\\
\Dcal_y(B)&=
\left\{
{\begin{array}{*{20}c}
(M,\Phi)\in\Rcal(B)\ \text{such that}\\
(M\otimes_B (B/\mfrak_B),\Phi\otimes\id)\cong (M_0\otimes_\Fbb(B/\mfrak_B),\Phi_0\otimes\id)
\end{array}}\right\}.
\end{align*}
Fixing a basis of $\Mfrak_0$ we may view $x$ as an $\Fbb$-valued point of $\widetilde{\Ccal_K}$. Denote by $\widetilde{\Dcal_x}$ the groupoid of deformations of $x$ in $\widetilde{\Ccal_K}$.\\
Under the morphism $\phi$ in $(\ref{localmodel})$, the point $x$ maps to a point $\bar x$ of $M_K$. This point defines an $\Fbb\otimes_{\Z_p}\Ocal_K$-submodule $L\subset (\Fbb\otimes_{\Z_p}\Ocal_K)^d$. Let $\Dcal_{\bar x}$ be the groupoid of deformations of $\bar x$, i.e.
\[\Dcal_{\bar x}=\{B\otimes_{\Z_p}\Ocal_K-\text{submodules}\ \Lcal\subset(B\otimes_{\Z_p}\Ocal_K)^d\mid \Lcal\otimes_B (B/\mfrak_B)\cong L\otimes_\Fbb(B/\mfrak_B)\}.\]
This groupoid is pro-represented by the completion of the local ring $\Ocal_{M_K,\bar x}$. Now we have the following commutative diagram.
\[
\begin{xy}\xymatrix{
&&& \widetilde{\Dcal_x}\ar[ld]_\pi\ar[rd]^\phi & \\
\Spf\ \widehat{\Ocal}_{C_K(\rho),x} \ar[d]\ar[rr]^{\xi'}&& \Dcal_x\ar[d] && \Dcal_{\bar x}\\
\Dcal_{\bar\rho_\infty} \ar[rr]^{\xi}&& \Dcal_y,
}\end{xy}
\]
where the lower left square is cartesian by Lemma $\ref{lemcartesian}$. As remarked above $\xi$ is formally smooth and hence so is $\xi'$.  
As $\Dcal_{\bar x}$ is pro-represented by the complete local ring at some closed point of the local model $M_K$ and as $\xi$, $\pi$ and $\phi$ are formally smooth, the assertion of the Proposition is true if it is true for $M_K$. But if follows from the definitions (using the notation of \cite[3.c]{phimod}) that
\[M_K\otimes_{\Z_p}\Q_p=\coprod_{\mu_i}M_{\mu_i,K}^{\rm loc}\otimes_{\Z_p}\Q_p,\]
for some cocharacters
\[\mu_i:\mathbb{G}_{m,\bar\Q_p}\longrightarrow (\Res_{K/\Q_p}\GL_d)_{\bar\Q_p}=\prod_{\psi:K\rightarrow \bar\Q_p}\GL_{d,\bar\Q_p},\]
such that
\[\bar\Q_p^d=\bigoplus_{n\in\{0,1\}}V_{n,i}^\psi,\]
where $V_{n,i}^\psi=\{v\in\bar\Q_p^d\mid(\pr_\psi\circ\mu_i)(a)v=a^nv\ \text{for all}\ a\in\bar\Q_p^\times\}$ and each of the $M_{\mu_i,K}^{\rm loc}$ is a local model in the sense of \cite{lokmod} (compare \cite[Remark 3.3]{phimod}). Hence, by \cite[Theorem 5.4]{lokmod}, the generic fiber of the local model $M_K\otimes_{\Z_p}\Q_p$ is normal, reduced and Cohen-Macaulay. The special fiber decomposes as follows: 
\[M_K\otimes_{\Z_p}\Fbb_p=\coprod_{\nu}M_{\mu_{\max}(\nu),K}^{\rm loc}\otimes_{\Z_p}\Fbb_p,\]
where $\nu$ runs over all cocharacters 
\[\mathbb{G}_{m,\bar\Q_p}\longrightarrow (\Res_{K/\Q_p}\mathbb{G}_m)_{\bar\Q_p},\] 
and where $\mu_{\max}(\nu)$ is the maximal dominant cocharacter $\mathbb{G}_m\rightarrow \Res_{K/\Q_p}\GL_d$ (for the dominance order) such that the composition
\[\begin{xy}
\xymatrix{
\mathbb{G}_{m,\bar\Q_p}\ar[r]&(\Res_{K/\Q_p}\GL_d)_{\bar\Q_p}\ar[r]^{\det}&(\Res_{K/\Q_p}\mathbb{G}_m)_{\bar\Q_p}
}
\end{xy}\]
equals $\nu$.
Now the claim again follows from \cite[Theorem 5.4]{lokmod}.
\end{proof}
\begin{rem}
We need to formulate the result on the local structure of the special fiber as a result about the underlying reduced scheme as the local models $M_{\mu,K}^{\rm loc}$ are in general not defined over $\Z_p$ but over a ramified extension and hence there are nilpotent elements in the special fiber $M_K\otimes_{\Z_p}\Fbb_p$.
\end{rem}
\begin{prop}\label{charzeroiso}
The map $C_K(\rho)\rightarrow \Spec\ R^\fl$ becomes an isomorphism in the generic fiber over $W(\Fbb)$, i.e.
\[\begin{xy}\xymatrix{
C_K(\rho)\otimes_{W(\Fbb)}{\rm Frac}(W(\Fbb))\ar[r]^{\hspace{10mm}\cong} & \Spec(R^\fl[\tfrac{1}{p}]).}
\end{xy}\]
\end{prop}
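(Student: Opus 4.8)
The plan is to show that the morphism $f\colon C_K(\rho)\otimes_{W(\Fbb)}W(\Fbb)[1/p]\to\Spec\,R^\fl[1/p]$ --- the generic fibre of the projective morphism $C_K(\rho)\to\Spec\,R^\fl$, hence itself projective --- is an \'etale monomorphism that is surjective; such a morphism is an open immersion, and a surjective open immersion is an isomorphism. Properness being automatic, I would establish three points: (i) over coefficient rings in which $p$ is invertible the $E$-height-$\le 1$ Kisin lattice inside a given \'etale $\phi$-module is \emph{unique}; (ii) $f$ is formally \'etale; (iii) $f$ is surjective.

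For (i): let $B$ be an Artinian local $W(\Fbb)[1/p]$-algebra and $(\Mfrak,\Phi)\in\Ccal_K(B)$, with \'etale $\phi$-module $M=\Mfrak[1/u]$. Since $K/K_0$ is totally ramified, $E(u)$ is Eisenstein, so $E(0)$ differs from $p$ by a unit of $W$; as $p\in B^\times$ this gives $E(u)=E(0)\cdot(1+ug(u))\in(B_W[[u]])^\times$ --- the analogue, with $p$ inverted, of Remark~\ref{charpok}. Hence the height condition $E(u)\Mfrak\subset\Phi(\phi^\ast\Mfrak)\subset\Mfrak$ collapses to $\Phi(\phi^\ast\Mfrak)=\Mfrak$, so $(\Mfrak,\Phi)$ is already an \'etale $\phi$-module over $B_W[[u]]$. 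To prove uniqueness I would take a second such lattice $\Mfrak'\subset M$, pass to $\Mfrak\cap\Mfrak'$ (still $\Phi$-stable, $\phi$ being flat) to assume $\Mfrak'\subset\Mfrak$, and note that on the finite-length quotient $Q=\Mfrak/\Mfrak'$ the map $\Phi$ induces a $B$-linear isomorphism $\phi^\ast Q\xrightarrow{\ \sim\ }Q$; but $\Fbb$ (and hence $k$) being finite, Frobenius is an automorphism of $W(k)$, so $\phi$ makes $B_W[[u]]$ free of rank $p$ over itself and $\dim_{\Q_p}\phi^\ast Q=p\cdot\dim_{\Q_p}Q$, forcing $Q=0$ and $\Mfrak=\Mfrak'$. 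By descent the unique lattice is defined over the residue field $\kappa(y)$ of any point $y$ of $\Spec\,R^\fl[1/p]$, and the same length computation over $\kappa(y)[\varepsilon]/(\varepsilon^2)$ shows it has no first-order deformations keeping $M$ fixed; thus $f$ is universally injective and, being locally of finite presentation by Lemma~\ref{lemcartesian}, a monomorphism (hence, with properness, a closed immersion).

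For (ii): by Lemma~\ref{lemcartesian}, $f$ is a base change of $\widehat{\Ccal}_K\to\widehat{\Rcal}$, so it suffices to see that the latter is formally \'etale on the relevant locus after inverting $p$. I would deduce this from the local-model diagram used in the proof of Proposition~\ref{localstructure}: there the complete local rings of $C_K(\rho)$ are linked by formally smooth maps to those of $M_K$, while $M_K\otimes\Q_p=\coprod_i\prod_\psi M^{\rm loc}_{\mu_i,K}\otimes\Q_p$ with each $\mu_i$ minuscule, so the generic fibre of each $M^{\rm loc}_{\mu_i,K}$ is a partial flag variety, in particular smooth; combined with the smoothness of $\widehat{\Rcal}$ and the $0$-dimensionality of the fibres from (i), the map forgetting the Kisin lattice is \'etale in the generic fibre. (Equivalently, deforming an $E$-height-$\le 1$ Kisin lattice while fixing its \'etale $\phi$-module is governed by a complex concentrated in degrees $0$ and $1$, hence unobstructed, and its first cohomology vanishes by the rigidity of (i).) Thus $f$ is \'etale, and with (i) an open immersion, i.e. an isomorphism onto an open and closed subscheme of $\Spec\,R^\fl[1/p]$.

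For (iii): a point of $\Spec\,R^\fl[1/p]$ is, after enlarging the residue field, given by a homomorphism $R^\fl\to\Ocal$ with $\Ocal$ the ring of integers of a finite extension of $W(\Fbb)[1/p]$, i.e. by a compatible system of flat deformations over the Artinian quotients $\Ocal/\mfrak_\Ocal^{n+1}$; the finite flat group schemes realizing them provide, by Theorem~\ref{grpschemeclass}, Kisin modules of $E$-height $\le 1$, which after inverting $p$ become locally free and \'etale, hence honest points of $\Ccal_K$ (Remark~\ref{charpok}), and by the uniqueness of (i) these choices are compatible and --- using properness of $\widehat{C}_K(\rho)\to\Spf\,\Ocal$ --- assemble into a point of $C_K(\rho)$ over the given point; equivalently, this is the input that a flat (hence crystalline, with Hodge--Tate weights in $\{0,1\}$) representation admits a locally free Kisin module of $E$-height $\le 1$. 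So $f$ is surjective, and by (i)--(iii) an isomorphism. \textbf{The main obstacle} I expect is the interplay of (ii) and (iii): establishing that, after inverting $p$, the $E$-height-$\le 1$ Kisin lattice is simultaneously rigid and existent --- this is exactly where $p$-adic Hodge theory enters, the lattice recording the Hodge filtration of the crystalline representation --- and making the compatible choice over all Artinian truncations in (iii) precise (the finite flat models being neither canonically compatible nor a priori locally free with coefficients) rests on that same uniqueness together with properness.
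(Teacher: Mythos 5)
The paper's proof of this proposition is essentially a citation: it invokes \cite[Proposition 2.4.8]{Kisin} and the local--structure result of Proposition~\ref{localstructure}, with the one remark that ``the main point is to check that the map is a bijection on points.'' Your proof is, in effect, an unpacking of that same argument, and you have correctly identified its three pillars: once $p$ is inverted $E(u)$ becomes a unit so the Kisin lattice is rigid (your (i)); the map is proper of relative dimension zero and formally \'etale (your (ii)); and every flat deformation in characteristic zero does carry an $E$-height $\le 1$ Kisin module (your (iii)). Combining these to get ``proper \'etale monomorphism + surjective $\Rightarrow$ isomorphism'' is exactly the shape of Kisin's argument, adapted to the present situation where $R^\fl$ is Ramakrishna's ring rather than Kisin's scheme-theoretic image (which is why the surjectivity step (iii) must actually be proved here, rather than holding by fiat). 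So I would call this the same route, just written out in full.

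Two places deserve a slightly sharper formulation. First, in (ii) the local-model diagram by itself only tells you that $C_K(\rho)\otimes W(\Fbb)[1/p]$ is smooth over $W(\Fbb)[1/p]$ (the $\mu_i$ being minuscule, each $M^{\rm loc}_{\mu_i,K}[1/p]$ is a flag variety); it does not directly control the morphism $f$ to $\Spec R^\fl[1/p]$. Your parenthetical deformation-theoretic remark is the argument that actually does the work: after inverting $p$ the lattice condition becomes $\Phi(\phi^\ast\Mfrak)=\Mfrak$, the deformation complex for lattices with a fixed \'etale $\phi$-module sits in degrees $0,1$ (so lifts along square-zero extensions are unobstructed) and its $H^1$ vanishes by the rigidity in (i); that is what makes $f$ formally \'etale, and it should be promoted from a parenthesis to the main line. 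Second, in (iii) you are right to flag the compatibility of the finite flat models across the Artinian truncations as the delicate point: uniqueness in (i) only applies once $p$ is inverted and so does not directly force the torsion-level Kisin modules to be compatible. The paper handles exactly this in Section~4 (Proposition~\ref{algebraizable} and its inverse-limit/finiteness argument), or alternatively one may simply invoke Kisin's theorem that a crystalline representation with Hodge--Tate weights in $\{0,1\}$ admits a unique Kisin module of $E$-height $\le 1$, without passing through the finite flat truncations at all; either remark closes the gap you point to.
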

\begin{proof}
Using the result on the local structure of $C_K(\rho)$, the proof is the same as in \cite[Proposition 2.4.8]{Kisin}. The main point is to check that the map is a bijection on points.
\end{proof}
\begin{theo}\label{maintheo}
Suppose that the universal flat deformation ring $R^{\rm fl}$ of $\bar\rho$ exists and denote by $\rho$ the universal flat deformation.
Then the morphism $C_K(\rho)\rightarrow \Spec\ R^\fl$ is topologically surjective.
\end{theo}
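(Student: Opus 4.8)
The plan is to analyse $g\colon C_K(\rho)\to\Spec\ R^\fl$ fibrewise over $W(\Fbb)$. Since $C_K(\rho)$ is projective over $R^\fl$ the morphism $g$ is proper, so $\im(g)$ is closed; by Proposition \ref{charzeroiso} it contains the generic fibre $\Spec\ R^\fl[1/p]$, and since $C_K(\rho)$ is topologically flat over $W(\Fbb)$ (Proposition \ref{localstructure}) we even get
\[
\im(g)=g\bigl(\overline{C_K(\rho)[1/p]}\bigr)\subseteq\overline{g(C_K(\rho)[1/p])}=\overline{\Spec\ R^\fl[1/p]},
\]
whence $\im(g)$ is \emph{exactly} the closure of $\Spec\ R^\fl[1/p]$ in $\Spec\ R^\fl$ (the reverse inclusion holds because $\im(g)$ is closed and contains $\Spec\ R^\fl[1/p]$). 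Thus the theorem is equivalent to the density of $\Spec\ R^\fl[1/p]$, i.e.\ to the statement that $p$ lies in no minimal prime of $R^\fl$; equivalently, it suffices to show $\im(g)\supseteq\Spec\ R^\fl/pR^\fl$. For this one only has to handle the components $V(\mfrak q)$ of $\Spec\ R^\fl/pR^\fl$ with $\mfrak q$ a minimal prime of $R^\fl$: every other component is $V(\mfrak q)$ with $\mfrak q\supsetneq\mfrak q'$ for some minimal prime $\mfrak q'$ of $R^\fl$ with $p\notin\mfrak q'$, and then $V(\mfrak q)\subseteq V(\mfrak q')=\overline{\{\mfrak q'\}}\subseteq\overline{\Spec\ R^\fl[1/p]}\subseteq\im(g)$.

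So fix a minimal prime $\mfrak q\ni p$ of $R^\fl$. By Lemma \ref{lemcartesian} and the construction of $C_K(\rho)$, on $p$-nilpotent bases $C_K(\rho)$ represents $\Spec\ R^\fl\times_{\Rcal}\Ccal_K$, where $\Spec\ R^\fl\to\Rcal$ is given by the \'etale $\phi$-module $M(\rho|_{G_{K_\infty}})$ of Proposition \ref{phimodul}; in particular the fibre of $C_K(\rho)$ over $\mfrak q$ parametrises pairs $(\Mfrak,\Phi)\in\Ccal_K(\kappa(\mfrak q))$ with an identification $(\Mfrak[1/u],\Phi)\cong M_{\kappa(\mfrak q)}:=M(\rho|_{G_{K_\infty}})\otimes_{R^\fl}\kappa(\mfrak q)$. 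Here there is no ``formal point'' obstruction: $\kappa(\mfrak q)$ has characteristic $p$, so $E(u)$ is a unit in $\kappa(\mfrak q)_W((u))$ by Remark \ref{charpok}. Moreover $\kappa(\mfrak q)_W[[u]]$ is a finite product of discrete valuation rings, so every finitely generated submodule of a free $\kappa(\mfrak q)_W((u))$-module is automatically locally free of finite rank. Consequently $\mfrak q\in\im(g)$ as soon as $M_{\kappa(\mfrak q)}$ admits a finitely generated $\kappa(\mfrak q)_W[[u]]$-submodule $\Mfrak$ with $\Mfrak[1/u]=M_{\kappa(\mfrak q)}$ and $E(u)\Mfrak\subseteq\Phi(\phi^\ast\Mfrak)\subseteq\Mfrak$ (``$E$-height $\leq 1$'').

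To produce such a submodule I would use the flat deformation condition at finite level. For each $n$ the reduction $\rho\bmod(\mfrak^{n+1},p)$ is a flat deformation over the Artinian $\Fbb$-algebra $A_n:=R^\fl/(\mfrak^{n+1},p)$, so by Kisin's theory of finite flat group schemes (compare Theorem \ref{grpschemeclass}) the $\phi$-module $M_{A_n}:=M(\rho|_{G_{K_\infty}})\otimes_{R^\fl}A_n$ contains a finitely generated $(A_n)_W[[u]]$-submodule $\Mfrak^{(n)}$ of $E$-height $\leq 1$ with $\Mfrak^{(n)}[1/u]=M_{A_n}$ --- obtained, after the appropriate Tate twist, from the Kisin module of an associated finite flat group scheme by saturating under the $A_n$-action (so $\Mfrak^{(n)}$ need not be locally free over $(A_n)_W[[u]]$). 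The crucial input is that the schemes $C_K(\xi_n)$ are projective, which confines all the $\Mfrak^{(n)}$ to one fixed bounded region of the relevant affine Grassmannian over $R^\fl/p$, uniformly in $n$. Passing to a limit inside this bounded region --- using the completeness of $R^\fl/p$, together with a Mittag--Leffler argument to compensate for the lack of an obvious projective system of the $\Mfrak^{(n)}$ --- yields a finitely generated $(R^\fl/p)_W[[u]]$-submodule $\Mfrak$ of $M(\rho|_{G_{K_\infty}})\otimes_{R^\fl}R^\fl/p$ with $\Mfrak[1/u]$ the whole module and $E(u)\Mfrak\subseteq\Phi(\phi^\ast\Mfrak)\subseteq\Mfrak$. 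Since $R^\fl/p\to\kappa(\mfrak q)$ is a localisation, hence flat, the base change of $\Mfrak$ is a submodule of $M_{\kappa(\mfrak q)}$ of the kind required above, so $\mfrak q\in\im(g)$. This finishes the proof.

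The hard part is precisely this passage to the limit. The finite flat group schemes attached to the truncations $\rho\bmod(\mfrak^{n+1},p)$ do not obviously form a projective system, and --- more seriously --- the modules $\Mfrak^{(n)}$ fail to be locally free over $(A_n)_W[[u]]$ in general; this failure is exactly what prevents $g$ from being scheme-theoretically surjective unless $e<p-1$ (the situation of the last corollary stated in the introduction), and the point of the argument is that it is repaired by the bounded limit followed by specialisation to a field of characteristic $p$, where local freeness is free of charge. A secondary technical nuisance is the book-keeping of the Tate twist interpolating between the normalisation of Theorem \ref{grpschemeclass} and that of the stack $\Ccal_K$.
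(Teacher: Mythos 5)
Your reduction at the start is correct and essentially agrees with the paper: properness of $g$ plus Proposition \ref{charzeroiso} plus topological flatness of $C_K(\rho)$ from Proposition \ref{localstructure} shows $\im(g)$ is the closure of $\Spec\,R^\fl[1/p]$, and you correctly reduce to showing that each minimal prime $\mfrak q$ of $R^\fl$ containing $p$ lies in $\im(g)$, and to producing for such $\mfrak q$ a Kisin module of $E$-height $\leq 1$ over $\kappa(\mfrak q)_W[[u]]$ inside $M_{\kappa(\mfrak q)}$ (your remarks about $E(u)$ becoming a unit in characteristic $p$ and about automatic local freeness over the product of DVRs $\kappa(\mfrak q)_W[[u]]$ are both correct). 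But the passage to the limit, which you yourself flag as the hard part, is where the proposal genuinely breaks down, and the hand-waving to ``bounded region of the Grassmannian'' and ``Mittag--Leffler'' does not fill the gap.

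The missing idea is the paper's reduction to a one-parameter specialization. Instead of trying to build a lattice over all of $R^\fl/p$ (or even directly over the Artinian quotients $R^\fl/(\mfrak^{n+1},p)$), the paper marks the specialization $\eta\rightsquigarrow x_0$ by a map from a complete DVR $\Ocal_F$ with residue field containing $k$, and does all the work over $\Ocal_F$. This buys three things that your proposal lacks. First, Lemma \ref{finiteset} shows that for each fixed $n$ the set $\mathcal{Z}_n$ of candidate lattices inside $M_n$ over $\Ocal_F/\varpi^{n+1}$ is \emph{finite}, which is exactly what makes a K\"onig-type argument produce a genuinely compatible sequence $\Mfrak_n$ (no projective system is given a priori, and ``Mittag--Leffler'' for the Grassmannian over $R^\fl/p$ is not a finiteness statement, so by itself it cannot manufacture compatibility). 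Second, Lemma \ref{boundedtorsion} controls the $u$-torsion of $\Mfrak_n/\varpi\Mfrak_n$ uniformly in $n$ using the DVR filtration $\varpi^i M/\varpi^{i+1}M$ and the relation $pj\leq e+j$ forced by $\Phi$; this is what makes Proposition \ref{algebraizable} work, i.e. lets one pass from a formal $\phi$-module over $\Spf\,\Ocal_F$ to an honest $\phi$-module over $\Ocal_F$ rather than over the $\varpi$-adically completed ring $\Ocal_F\{\{u\}\}$. That argument uses the principal-ideal structure of $\Ocal_F((u))$ via the decomposition over embeddings $k\hookrightarrow l$; nothing of the sort is available over $(R^\fl/p)_W((u))$. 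Third, having algebraized over $\Ocal_F$, the paper applies the valuative criterion of properness (Proposition \ref{existlift}) to extend the lattice from the generic point $\Spec\,F$ to all of $\Spec\,\Ocal_F$, and then one last input (Breuil's full-faithfulness of restriction to $G_{K_\infty}$ for flat $p$-torsion representations) pins down that the resulting $\Spec\,\Ocal_F\to\Spec\,R^\fl$ is the one you started with. In your proposal there is no DVR, hence no valuative criterion and no control of $u$-torsion as $n\to\infty$; a lattice $\Mfrak$ over $(R^\fl/p)_W[[u]]$ that you hope to produce is not something the existing lemmas give you, and in fact the only reason one should expect it at all is the theorem being proved.

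In short: the framing is right and the reduction to a Kisin module at $\mfrak q$ is right, but the construction of that Kisin module is asserted, not proved. You need the intermediate step of restricting along a DVR specialization $\Spec\,\Ocal_F\to\Spec\,\overline{R}^\fl$, where Lemmas \ref{boundedtorsion}, \ref{finiteset} and Propositions \ref{algebraizable}, \ref{existlift} combine to give the desired lift; none of these tools has an analogue over $R^\fl/p$ itself.
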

We will prove this theorem in section $4$ below. We will conclude this section with some consequences of Theorem $\ref{maintheo}$.
\begin{cor}
Assume that $R^\fl$ exists, then $\Spec\,R^\fl$ is topologically flat.
\end{cor}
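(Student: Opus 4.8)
The plan is to transport the density of the generic fibre from the coefficient space $C_K(\rho)$ to $\Spec\,R^\fl$ along the morphism of Theorem~\ref{maintheo}. Write $f\colon C_K(\rho)\rightarrow \Spec\,R^\fl$ for this morphism. Since it is a morphism of $W(\Fbb)$-schemes, the preimage under $f$ of the open subscheme $\Spec\,R^\fl[1/p]\subseteq \Spec\,R^\fl$ (the locus where $p$ is invertible) is exactly the generic fibre $C_K(\rho)\otimes_{W(\Fbb)}W(\Fbb)[1/p]$ of the coefficient space.

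First I would let $Z\subseteq \Spec\,R^\fl$ be the closure of the subset $\Spec\,R^\fl[1/p]$, and the goal is to show $Z=\Spec\,R^\fl$. Because $f$ is continuous, $f^{-1}(Z)$ is a closed subset of $C_K(\rho)$ containing $f^{-1}(\Spec\,R^\fl[1/p])=C_K(\rho)\otimes_{W(\Fbb)}W(\Fbb)[1/p]$; by Proposition~\ref{localstructure} the latter is dense in $C_K(\rho)$ (this is precisely the topological flatness of $C_K(\rho)$ established there), so $f^{-1}(Z)=C_K(\rho)$. Applying $f$ and invoking the topological surjectivity from Theorem~\ref{maintheo}, we obtain $\Spec\,R^\fl=f(C_K(\rho))=f\bigl(f^{-1}(Z)\bigr)\subseteq Z$, hence $Z=\Spec\,R^\fl$. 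This says exactly that $\Spec\,R^\fl[1/p]$ is dense, i.e. $\Spec\,R^\fl$ is topologically flat.

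I expect essentially no obstacle here: once Theorem~\ref{maintheo} and Proposition~\ref{localstructure} are available, the argument is the elementary fact that a dense subset pushes forward to a dense subset under a (topologically) surjective map, together with the identification of the generic fibre of $C_K(\rho)$ as the preimage of that of $\Spec\,R^\fl$. One could alternatively use Proposition~\ref{charzeroiso} to identify the two generic fibres directly, but this is not needed for the density statement. The only minor point to keep straight is that ``topologically surjective'' is used in the strong sense of surjectivity of underlying spaces; were only dominance of $f$ known, the same conclusion would still follow, since a closed set containing a dense subset is the whole space.
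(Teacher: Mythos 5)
Your proof is correct and is essentially the argument the paper intends: it combines the topological surjectivity of $C_K(\rho)\rightarrow\Spec\,R^\fl$ from Theorem~\ref{maintheo} with the topological flatness of $C_K(\rho)$ from Proposition~\ref{localstructure}, exactly as the paper's one-line proof does, only written out in full.
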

\begin{proof}
This follows from Theorem \ref{maintheo} and the corresponding result on $C_K(\rho)$.
\end{proof}
\begin{prop}
Assume that the universal deformation ring $R$ of $\bar\rho$ exists with universal deformation $\rho^{\rm univ}$. Then $C_K(\rho^{\rm univ})\rightarrow \Spec\,R$ factors over $\Spec\,R^{\fl}$ and is (canonically) isomorphic to $C_K(\rho)$.
\end{prop}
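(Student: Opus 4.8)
The plan is to deduce the statement from the single claim that the structure morphism $C_K(\rho^{\rm univ})\to\Spec\ R$ factors through the closed subscheme $\Spec\ R^\fl\hookrightarrow\Spec\ R$. First recall that, since $\Dcal^\fl_{\bar\rho}$ is a subfunctor of $\Dcal_{\bar\rho}$ and both are pro-representable, $R^\fl$ is a quotient of $R$ (cf.\ \cite{Rama}); write $I=\kernel(R\to R^\fl)$, so that $\Spec\ R^\fl$ is cut out by $I$ and the universal flat deformation is $\rho=\rho^{\rm univ}\otimes_R R^\fl$. Under the morphism $(\ref{deformequiv})$ the point of $\widehat{\Rcal}$ attached to $\rho|_{G_{K_\infty}}$ is obtained from the one attached to $\rho^{\rm univ}|_{G_{K_\infty}}$ by restriction along $\Spec\ R^\fl\hookrightarrow\Spec\ R$. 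Since the schemes $\Spec(A/\mfrak^{n+1})\times_\Rcal\Ccal_K$ and their algebraization via the very ample bundle on the affine Grassmannian are compatible with base change in $A$, this yields a canonical identification
\[
C_K(\rho)\;\cong\;C_K(\rho^{\rm univ})\times_{\Spec\ R}\Spec\ R^\fl,
\]
hence a closed immersion $C_K(\rho)\hookrightarrow C_K(\rho^{\rm univ})$ over $\Spec\ R$. Granting the claim, this closed immersion is the identity, and since $\rho^{\rm univ}$ pulls back to $\rho$ along it, one obtains the asserted canonical isomorphism $C_K(\rho^{\rm univ})\cong C_K(\rho)$ over $\Spec\ R^\fl$.

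To prove the claim I would argue that $I\cdot\Ocal_{C_K(\rho^{\rm univ})}=0$. As $C_K(\rho^{\rm univ})$ is of finite type over the complete local noetherian ring $R$, the Krull intersection theorem reduces this to showing: for every local Artinian $W(\Fbb)$-algebra $B$ (with residue field a finite extension of $\Fbb$) and every $B$-valued point $\Spec\ B\to C_K(\rho^{\rm univ})$, the induced homomorphism $R\to B$ factors through $R^\fl$ --- equivalently, the deformation $\rho^{\rm univ}\otimes_R B$ is flat. Such a point factors through $C_K(\rho^{\rm univ})\otimes_R(R/\mfrak^{N})$ for $N\gg0$, a scheme on which $p$ is nilpotent.

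Now use the defining property $C_K(\rho^{\rm univ})\otimes_R(R/\mfrak^{N})=\Spec(R/\mfrak^{N})\times_\Rcal\Ccal_K$: the point yields a pair $(\Mfrak,\Phi)\in\Ccal_K(B)$ --- a genuine $B$-valued point of $\Ccal_K$ by Remark \ref{charpok}, as $p$ is nilpotent on $B$ --- together with an isomorphism $(\Mfrak[\tfrac{1}{u}],\Phi)\cong M((\rho^{\rm univ}\otimes_R B)|_{G_{K_\infty}})$ in $\Rcal(B)$. By \cite[Proposition 4.3]{phimod}, exactly as in the proof of Lemma \ref{lemcartesian}, the pair $(\Mfrak,\Phi)$ has an associated flat deformation $\widetilde\xi$ of $\bar\rho$ over $B$ whose restriction to $G_{K_\infty}$ corresponds to $(\Mfrak[\tfrac{1}{u}],\Phi)$; by Proposition \ref{phimodul} this gives an isomorphism $\widetilde\xi|_{G_{K_\infty}}\cong(\rho^{\rm univ}\otimes_R B)|_{G_{K_\infty}}$ in $\Dcal_{\bar\rho_\infty}(B)$. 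The crucial point is to upgrade this to an isomorphism $\widetilde\xi\cong\rho^{\rm univ}\otimes_R B$ of deformations of $\bar\rho$: this rests on the full faithfulness of the restriction functor to $G_{K_\infty}$ (valid for $p$ odd; see \cite{Kisin}), applied to the flat representation $\widetilde\xi$. Granting it, $\rho^{\rm univ}\otimes_R B\cong\widetilde\xi$ is flat, so $R\to B$ factors through $R^\fl$, as needed.

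The main obstacle is precisely this last identification: that the flat model attached to a Kisin module lying in $C_K(\rho^{\rm univ})$ is not merely \emph{some} flat deformation with the same restriction to $G_{K_\infty}$ as $\rho^{\rm univ}\otimes_R B$, but is genuinely isomorphic to it as a $G_K$-representation. Everything else --- that $\Dcal^\fl_{\bar\rho}$ is closed in $\Dcal_{\bar\rho}$, the compatibility of the coefficient space with base change, and the reduction via the Krull intersection theorem --- is routine.
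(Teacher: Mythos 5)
Your proposal reaches the same conclusion via the same essential input (Pappas--Rapoport, Proposition 4.3), but packages the argument differently from the paper. The paper works level by level: it observes that each $C_K(\rho^{\rm univ}_n)\to\Spec(R/\mfrak_R^{n+1})$ factors over $\Spec(R^\fl/\mfrak_{R^\fl}^{n+1})$ (citing \cite[Proposition 4.3]{phimod} directly for this), deduces that the truncations $\widehat C_K(\rho_n)\to\widehat C_K(\rho^{\rm univ}_n)$ are isomorphisms, and then passes to the algebraizations by formal GAGA. You instead reduce the global claim $I\cdot\Ocal_{C_K(\rho^{\rm univ})}=0$ to Artinian-valued points via properness and the Krull intersection theorem, and handle those points by unpacking the fiber product. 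This is a legitimate reorganization and buys nothing essential; the formal-GAGA/base-change step is used in both, just presented differently.

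The place where you go wrong is the step you yourself flag as ``the main obstacle.'' You assert that the identification $\widetilde\xi\cong\rho^{\rm univ}\otimes_R B$ ``rests on the full faithfulness of the restriction functor to $G_{K_\infty}$ \dots applied to the flat representation $\widetilde\xi$.'' Full faithfulness of the restriction functor on \emph{flat} $G_K$-representations only lets you compare two objects both known to lie in the source category; here $\rho^{\rm univ}\otimes_R B$ is precisely what you are trying to prove is flat, so you cannot apply full faithfulness to a pair containing it. What you actually need is the stronger statement that a deformation of the flat $\bar\rho$ whose restriction to $G_{K_\infty}$ admits a $\Ccal_K$-lattice is itself flat and equals the generic fiber of the corresponding group scheme --- this is exactly the content of \cite[Proposition 4.3]{phimod}, which the paper cites for this purpose and which you cite only in the weaker role of producing $\widetilde\xi$. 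So the gap is covered by a citation you already make, but your explanation mislocates the work: it is not a formal consequence of full faithfulness alone, and the paper's one-line appeal to Proposition 4.3 is where the real content lives.
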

\begin{proof}
For $n\geq 0$ denote by $\rho_n:\Spec(R^\fl/\mfrak_{R^\fl}^{n+1})\rightarrow \Rcal$ the reduction of $\rho$ modulo $\mfrak_{R^\fl}^{n+1}$, and similarly $(\rho^{\rm univ})_n$. We consider the following diagram with all rectangles cartesian.
\[\begin{xy}\xymatrix{
C_K(\rho_n)\ar[rr]\ar[d] && C_K(\rho^{\rm univ}_n) \ar[rr]\ar[d] && \Ccal_K\ar[d]\\
\Spec(R^\fl/\mfrak^{n+1}_{R^\fl})\ar[rr]\ar@/_1pc/[rrrr]_{\rho_n} && \Spec(R/\mfrak^{n+1}_R)\ar[rr]^{\hspace{5mm}(\rho^{\rm univ})_n} && \Rcal.
}\end{xy}\]
By \cite[Proposition 4.3]{phimod} the morphism $C_K(\rho^{\rm univ}_n)\rightarrow \Spec(R/\mfrak^{n+1}_R)$ factors over $\Spec(R^\fl/\mfrak^{n+1}_{R^\fl})$ and hence $\widehat{C}_K(\rho_n)\rightarrow \widehat{C}_K(\rho_n^{\rm univ})$ is an isomorphism. \\
As $\Spf\,R^\fl\rightarrow \Spf\,R$ is a closed immersion the formal scheme $\widehat{C}_K(\rho)$ is a projective formal $\Spf\,R$-scheme and applying formal GAGA (see \cite[5.4]{EGA3}) over $\Spf\,R$ we find that also the algebraizations $C_K(\rho)$ and $C_K(\rho^{\rm univ})$ are isomorphic over $\Spec\,R$.
\end{proof}
\begin{prop}\label{lowram}
Assume that $e=[K:K_0]<p-1$. Then the morphism 
\[C_K(\rho)\rightarrow \Spec\ R^\fl\]
is an isomorphism.
\end{prop}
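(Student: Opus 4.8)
The plan is to use the rigidity of finite flat group schemes in low ramification to show that $C_K(\rho)$ already agrees with $\Spec\ R^\fl$ after reduction modulo every power of the maximal ideal, and then to conclude by formal GAGA. The local input is the following: let $A$ be an Artinian local $W(\Fbb)$-algebra with residue field a finite extension of $\Fbb$ and let $\xi$ be a flat deformation of $\bar\rho$ over $A$. Since $e<p-1$, the generic fibre functor on finite flat group schemes of $p$-power order over $\Spec\ \Ocal_K$ is fully faithful by \cite{Conrad}. Equipping the finite flat model of $\xi$ with its $A$-action as in \cite[Proposition 4.3]{phimod} and passing to $\Phi$-modules via Theorem \ref{grpschemeclass} (in its version with coefficients), one obtains a pair $(\Mfrak,\Phi)\in\Ccal_K(A)$ together with an isomorphism $(\Mfrak[1/u],\Phi)\cong M(\xi|_{G_{K_\infty}})$ in $\widehat{\Rcal}(A)$; full faithfulness in \cite{Conrad} shows that this datum is unique up to a unique isomorphism, in particular it has no non-trivial automorphisms.

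Now I would plug this into Lemma \ref{lemcartesian}. For $n\geq 0$ set $A_n=R^\fl/\mfrak_{R^\fl}^{n+1}$ and let $\rho_n$ be the reduction of $\rho$ modulo $\mfrak_{R^\fl}^{n+1}$, a flat deformation over $A_n$. By Lemma \ref{lemcartesian} the scheme $C_K(\rho_n)$ represents $\Spec\ A_n\times_{\widehat{\Rcal}}\widehat{\Ccal}_K$, where $\Spec\ A_n\to\widehat{\Rcal}$ corresponds to the $\Phi$-module $M(\rho_n|_{G_{K_\infty}})$; so for any $A_n$-algebra $B$ the groupoid $C_K(\rho_n)(B)$ is that of pairs $(\Mfrak,\Phi)\in\Ccal_K(B)$ together with an isomorphism from $(\Mfrak[1/u],\Phi)$ to the base change to $B$ of $M(\rho_n|_{G_{K_\infty}})$. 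By the local input this groupoid is equivalent to a point: it is non-empty since the Kisin module attached to $\rho_n$ over $A_n$ base changes to such an object over $B$ (by Remark \ref{charpok}, $E(u)$ becomes a unit and $\Ccal_K$ is stable under base change once $p$ is nilpotent), and it is rigid by the uniqueness and the triviality of the automorphisms. Hence $C_K(\rho_n)\to\Spec\ A_n$ is an isomorphism for all $n$, i.e. the structure morphism $\widehat{C}_K(\rho)\to\Spf\ R^\fl$ is an isomorphism of formal schemes. Since $C_K(\rho)$ and $\Spec\ R^\fl$ are both proper over $\Spec\ R^\fl$ (the former projective, by construction) with isomorphic $\mfrak_{R^\fl}$-adic completions, and $R^\fl$ is complete local noetherian, the completion functor on proper $R^\fl$-schemes is fully faithful (formal GAGA, \cite[5.4]{EGA3}), so this isomorphism of formal schemes is induced by a unique isomorphism $C_K(\rho)\cong\Spec\ R^\fl$ of $R^\fl$-schemes.

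The main obstacle is the local input of the first paragraph: one has to be sure that the uniqueness of \cite{Conrad}, stated for group schemes, really provides uniqueness with trivial automorphisms of the pair $(\Mfrak,\Phi)$ \emph{within} $\Ccal_K(A)$. This requires in particular that the $\Phi$-module attached to a flat deformation over an Artinian ring be locally free of rank $d$, and not merely of projective dimension $1$ as in Theorem \ref{grpschemeclass}, and that the passage from group schemes to $\Phi$-modules be compatible with base change along $A\to B$. Granting this, the reduction to finite levels and the algebraization are formal and use only Lemma \ref{lemcartesian}, Remark \ref{charpok} and formal GAGA.
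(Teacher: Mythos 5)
Your proposal follows the same strategy as the paper: in low ramification, Raynaud's uniqueness of finite flat models forces $\widehat{C}_K(\rho)$ to coincide with $\Spf R^\fl$, and the scheme-level isomorphism is obtained by algebraization. The citation of Conrad versus the paper's direct citation of Raynaud (\cite[Proposition 3.3.2]{Raynaud}) is cosmetic: Conrad's statement on full faithfulness of the generic fibre for $e<p-1$ is Raynaud's theorem.

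The gap you flag at the end is real and is precisely the point the paper has to work for. Theorem \ref{grpschemeclass} associates to the flat model a $W[[u]]$-module of projective dimension $1$, with no $A$-module structure a priori; to get a point of $\Ccal_K(A)$ you need a locally free $A_W[[u]]$-module of rank $d$. The paper bridges this in two steps that your argument omits: it replaces the module $\Mfrak$ by its $A_W[[u]]$-span inside the \'etale $\phi$-module $(M,\Phi)$ over $A_W((u))$ so that it becomes an $A_W[[u]]$-submodule, and then invokes the argument of \cite[Remark~4.4]{phimod} to conclude that this span is actually free over $A_W[[u]]$. Without some such device your first paragraph does not produce an object of $\Ccal_K(A)$, and ``granting this'' is exactly where the content of the proof sits. (The compatibility with base change along $A\to B$, which you also raise, is then handled by the freeness.)

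A smaller slip: you assert that ``by the local input, the groupoid $C_K(\rho_n)(B)$ is equivalent to a point'' for \emph{every} $A_n$-algebra $B$. The Raynaud/Conrad uniqueness only applies to finite flat group schemes, i.e.\ to Artinian local coefficient rings; for a general $A_n$-algebra $B$ the group scheme is not finite and the statement is not available directly. What the local input actually yields is that $C_K(\rho_n)\to \Spec A_n$ has a section and is bijective on points valued in Artinian local rings with residue field a finite extension of $\Fbb$; one then has to upgrade this to an isomorphism using that $C_K(\rho_n)$ is projective over the Artinian ring $A_n$ with a unique closed point, hence finite over $A_n$, and Nakayama. The paper phrases this as both objects ``pro-representing the same functor,'' which is the same upgrade stated abstractly; either way, the step should not be presented as an immediate consequence of the uniqueness over Artinian rings for all $B$.
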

\begin{proof}
It is enough to show that $\widehat{C}_K(\rho)\rightarrow \Spf\ R^\fl$ is an isomorphism. We show that both objects pro-represent the same functor, i.e. $\widehat{C}_K(\rho)$ pro-represents the deformation functor $\Dcal_{\bar\rho}^\fl$. Let $A$ be a local Artinian ring and $\xi\in\Dcal^\fl_{\bar\rho}(A)$ a flat deformation of $\bar\rho$. By a result of Raynaud (cf. \cite[Proposition 3.3.2]{Raynaud}) there is a unique flat model for this deformation.
Denote by $(\Mfrak,\Phi)$ the $W[[u]]$-module associated with this group scheme by Kisin's classification. This is a $W[[u]]$-submodule of the \'etale $\phi$-module $(M,\Phi)$ over $A_W((u))$ corresponding to the (twist of) the restriction of $\xi$ to $G_{K_\infty}$. Replacing $\Mfrak$ by its $A_W[[u]]$-span inside $M$, we may assume that it is an $A_W[[u]]$-submodule of $M$. Applying the argument of \cite[Remark 4.4]{phimod} we find that $\Mfrak$ is free over $A_W[[u]]$
This defines the unique point in $\Ccal_K(A)$ above $\xi$. We have shown that the functor morphism $\widehat{C}_K(\rho)\rightarrow \Dcal^\fl_{\bar\rho}$ is bijective on $A$-valued points. The claim follows.
\end{proof}
\begin{rem}
All the above results also apply to framed deformation rings. We need to replace the deformation functors by deformation groupoids and the fiber products by $2$-fiber products.
 For the corresponding result on the local structure one only needs that the morphism $\Dcal^\square_{\bar\rho_\infty}\rightarrow \widehat{\Rcal}$ is smooth, where $\Dcal^\square_{\bar\rho_\infty}$ denotes the groupoid of framed deformations of $\bar\rho_\infty$.
The result for framed deformations (or for deformation stacks) can be stated as follows:
Given a field $F$ of characteristic $p$ and a morphism $\Spec\,F\rightarrow \Spec\,R^\fl$. There exists an fpqc-cover $F'$ of $F$ and a integral complete local ring $(A,\mfrak)$ with ${\rm char}({\rm Frac}\,A)=0$ and $A/\mfrak=F'$ such that the composition
\[\Spec\,F'\longrightarrow \Spec\,F\longrightarrow\Spec\,R^\fl\]
lifts to a morphism $\Spec\,A\rightarrow \Spec\,R^\fl$. 
\end{rem}
\begin{rem}
If the prime $p$ equals $2$, then there is a similar classification of finite flat group schemes as in Theorem $\ref{grpschemeclass}$, but it only applies to connected group schemes (see \cite{Kisinp=2}). Hence the same results hold in the case $p=2$, if one considers deformations that are the generic fiber of a connected finite flat group scheme. 
\end{rem}
\section{Proof of Theorem \ref{maintheo}}
In this section we prove the main result, Theorem $\ref{maintheo}$. \\
Let $e=[K:K_0]$ denote the ramification index of $K$ over $\Q_p$. Then the degree of the Eisenstein polynomial $E(u)$ is $e$ and its reduction modulo $p$ is $u^e\in k[u]$.\\
For the rest of the section we denote by $\Ocal_F=l[[\varpi]]$ a complete discrete valuation ring in characteristic $p$
with finite residue field $l$ containing $k$. We will use the notation $A_n=\Ocal_F/(\varpi^{n+1})\otimes_{\Fbb_p}k$. For a ring $R$ and a free $R((u))$-module $R((u))^d$, a finitely generated projective $R[[u]]$-submodule that generates $R((u))^d$ will be called a lattice in $R((u))^d$. Finally, we will write $\Ocal_F\{\{u\}\}$ for the $\varpi$-adic completion of $\Ocal_F((u))$.
\begin{lem}\label{boundedtorsion}
Let $(M,\Phi)\in\Rcal(\Ocal_F/(\varpi^{n+1}))$ and $\Mfrak\subset M$ a finitely generated $A_n[[u]]$-submodule such that $\Mfrak[1/u]=M$ and 
\[u^e\Mfrak\subset \Phi(\phi^\ast\Mfrak)\subset\Mfrak.\]
Then the $l$-dimension of the $u$-torsion part of the finitely generated $l[[u]]$-module $\Mfrak/\varpi\Mfrak$ is bounded by
\[\dim_l (\Mfrak/\varpi\Mfrak)^{\rm tors}\leq [k:\Fbb_p]d\tfrac{e}{p-1}.\]
\end{lem}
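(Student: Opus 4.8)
The plan is to pass to the reduction $\bar\Mfrak:=\Mfrak/\varpi\Mfrak$ and reduce the estimate to a length computation over $A_0[[u]]=(l\otimes_{\Fbb_p}k)[[u]]$, a finite product of copies of $l[[u]]$. Set $\Mfrak':=\Phi(\phi^\ast\Mfrak)$, so that $u^e\Mfrak\subseteq\Mfrak'\subseteq\Mfrak$ and $Q:=\Mfrak/\Mfrak'$ is killed by $u^e$; since $\Phi\colon\phi^\ast M\to M$ is an isomorphism and $\phi$ is faithfully flat, $\Phi$ restricts to an isomorphism $\phi^\ast\Mfrak\xrightarrow{\ \sim\ }\Mfrak'$. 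Reducing modulo $\varpi$ one obtains a map $\bar\Phi\colon\phi^\ast\bar\Mfrak\to\bar\Mfrak$ which is an isomorphism after inverting $u$, with cokernel $\bar Q=Q/\varpi Q$; under the identification $\phi^\ast\bar\Mfrak=\Mfrak'/\varpi\Mfrak'$ coming from $\Phi$, one has $\ker\bar\Phi=(\Mfrak'\cap\varpi\Mfrak)/\varpi\Mfrak'$ and $\coker\bar\Phi=\Mfrak/(\Mfrak'+\varpi\Mfrak)$. The quantity to bound is $t:=\dim_l\bar\Mfrak^{\rm tors}$, the $l$-dimension of the $u$-power-torsion submodule, and I will in fact show $(p-1)t\le[k:\Fbb_p]\,d\,e$.

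The argument combines two identities. The first is formal: if $f\colon N_1\to N_2$ is a map of finitely generated $A_0[[u]]$-modules which becomes an isomorphism after inverting $u$, then $\ker f$ and $\coker f$ are of finite length and, applying the snake lemma to $0\to N_i^{\rm tors}\to N_i\to N_i^{\rm fr}\to0$ together with additivity of length (using that $f^{\rm fr}$ is injective, being a map of free modules of equal rank which is injective after inverting $u$),
\[\dim_l\ker f-\dim_l\coker f=\dim_l N_1^{\rm tors}-\dim_l N_2^{\rm tors}-\dim_l\coker f^{\rm fr}.\]
Applied to $\bar\Phi$, and using that $\phi^\ast$ multiplies $u$-adic lengths by $p$, so $(\phi^\ast\bar\Mfrak)^{\rm tors}=\phi^\ast(\bar\Mfrak^{\rm tors})$ has dimension $pt$, this gives
\[\dim_l\ker\bar\Phi-\dim_l\coker\bar\Phi=(p-1)t-\dim_l\coker\bar\Phi^\circ,\]
where $\bar\Phi^\circ\colon\phi^\ast\bar\Mfrak^\circ\to\bar\Mfrak^\circ$ is the map induced on the quotient $\bar\Mfrak^\circ:=\bar\Mfrak/\bar\Mfrak^{\rm tors}$, which one identifies with the image of $\bar\Mfrak$ in $M/\varpi M$; since $M/\varpi M$ is free of rank $d$ over $A_0((u))$, the module $\bar\Mfrak^\circ$ is free of rank $d$ over $A_0[[u]]$. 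The second identity relates $\bar\Phi$ with $Q$: writing $\Mfrak[\varpi]=\{m\in\Mfrak:\varpi m=0\}=\Mfrak\cap M[\varpi]$ and using $M[\varpi]=\varpi^nM$ (valid because $\varpi^{n+1}=0$ in $A_n$), a short diagram chase yields natural exact sequences of finite length modules
\[0\to\Mfrak[\varpi]\big/(\Mfrak[\varpi]\cap\Mfrak')\to Q\xrightarrow{\ \cdot\varpi\ }\varpi\Mfrak/\varpi\Mfrak'\to0,\qquad 0\to\ker\bar\Phi\to\varpi\Mfrak/\varpi\Mfrak'\to Q\to\coker\bar\Phi\to0,\]
whence, taking alternating sums of $l$-dimensions and substituting,
\[\dim_l\coker\bar\Phi-\dim_l\ker\bar\Phi=\dim_l\bigl(\Mfrak[\varpi]\big/(\Mfrak[\varpi]\cap\Mfrak')\bigr)\ \ge\ 0.\]

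Adding the two displayed identities cancels the left-hand sides and gives
\[(p-1)t=\dim_l\coker\bar\Phi^\circ-\dim_l\bigl(\Mfrak[\varpi]\big/(\Mfrak[\varpi]\cap\Mfrak')\bigr)\ \le\ \dim_l\coker\bar\Phi^\circ.\]
Finally, $\coker\bar\Phi^\circ$ is a quotient of $\coker\bar\Phi=\bar Q$, hence is killed by $u^e$; being also a quotient of the free module $\bar\Mfrak^\circ\cong A_0[[u]]^d$, it is a quotient of $\bar\Mfrak^\circ/u^e\bar\Mfrak^\circ\cong(A_0[u]/(u^e))^d$, whose $l$-dimension equals $d\,e\,\dim_l A_0=[k:\Fbb_p]\,d\,e$. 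Therefore $(p-1)t\le[k:\Fbb_p]\,d\,e$, which is the claimed bound. The key point — and the reason the estimate is not vacuous — is that $\Mfrak$ itself need not be free over $A_n[[u]]$, so one cannot control the torsion through $\bar Q$ directly but must route it through the free quotient $\bar\Mfrak^\circ$; the main labour is then the (purely formal) bookkeeping of the exact sequences above, the fact that $A_0$ is a product of fields permuted by $\phi$ being harmless since $\bar\Phi$ is $A_0[[u]]$-linear and only $l$-dimensions intervene.
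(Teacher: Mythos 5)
Your proof is correct, but it takes a genuinely different route from the paper's. The paper filters $\Mfrak/\varpi\Mfrak$ by the $\varpi$-adic levels $(\Mfrak\cap\varpi^i M)/(\varpi\Mfrak\cap\varpi^i M+\Mfrak\cap\varpi^{i+1}M)$, uses an elementary-divisor telescoping argument to show the entire $u$-torsion is concentrated in the top level $(\Mfrak\cap\varpi^n M)/\varpi^n\Mfrak$, and then bounds that single term by observing that if $j$ is minimal with $u^j(\Mfrak\cap\varpi^n M)\subset\varpi^n\Mfrak$, the $\Phi$-stability inclusions force $pj\le e+j$, so that term is killed by $u^{\lfloor e/(p-1)\rfloor}$. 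Your argument instead works entirely modulo $\varpi$ and is an Euler-characteristic computation: you compute $\dim_l\ker\bar\Phi-\dim_l\coker\bar\Phi$ in two ways, once via the snake lemma applied to the torsion/free decomposition (which gives $(p-1)t-\dim_l\coker\bar\Phi^\circ$, the factor $p$ coming from $\phi^\ast$ scaling lengths by $p$), and once via the exact sequences linking $\ker\bar\Phi$, $\coker\bar\Phi$, $Q=\Mfrak/\Phi(\phi^\ast\Mfrak)$ and $\varpi\Mfrak/\varpi\Phi(\phi^\ast\Mfrak)$ (which shows the difference is $-\dim_l(\Mfrak[\varpi]/\Mfrak'[\varpi])\le 0$); combining yields $(p-1)t\le\dim_l\coker\bar\Phi^\circ\le[k:\Fbb_p]de$. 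The paper's approach is closer in spirit to Kisin's original boundedness arguments, isolates exactly where the torsion lives, and yields the marginally sharper integral bound $[k:\Fbb_p]d\lfloor e/(p-1)\rfloor$; yours is more homological, avoids tracking intermediate $\varpi$-levels and elementary divisors, and makes transparent that the $p$ in $p-1$ is just the length-scaling of $\phi^\ast$ and the $e$ is just the $u^e$-annihilation of $\coker\bar\Phi$. Both arguments rely on the same two features of the setup ($\Phi$-stability of $\Mfrak$ up to $u^e$, and $\phi^\ast$ being finite flat of degree $p$ on lengths) and both prove the stated inequality.
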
 
\begin{proof}
We can describe the $u$-torsion as follows.
\[\Mfrak/\varpi\Mfrak=\bigoplus_{i=0}^n (\Mfrak\cap\varpi^iM)/(\varpi\Mfrak\cap\varpi^{i}M +\Mfrak\cap\varpi^{i+1}M).\]
In this direct sum the summand for $i=0$ is the free part in the quotient and the $i$-th summand is the contribution of the 
elements in $\Mfrak\cap(\varpi^iM\backslash\varpi^{i+1}M)$ to the $u$-torsion. Further for $i\in{1,\dots,n-1}$ we have
\begin{align*}
&\dim_l (\Mfrak\cap \varpi^iM)/(\varpi^i\Mfrak+\Mfrak\cap\varpi^{i+1}M)\\+&\dim_l (\Mfrak\cap\varpi^{i+1}M)/(\varpi\Mfrak\cap\varpi^{i+1}M +\Mfrak\cap\varpi^{i+2}M)\\
=&\dim_l (\Mfrak\cap \varpi^{i+1}M)/(\varpi^{i+1}\Mfrak+\Mfrak\cap\varpi^{i+2}M).\end{align*}
This can be seen using the interpretation of $\dim_l (\Mfrak\cap \varpi^iM)/(\varpi^i\Mfrak+\Mfrak\cap\varpi^{i+1}M)$ as the sum of all elementary divisors
of the lattice $(\Mfrak\cap\varpi^iM)/(\Mfrak\cap\varpi^{i+1}M)$ with respect to $\varpi^i\Mfrak/(\Mfrak\cap\varpi^{i+1}M)$ as $l[[u]]$-lattices in $\varpi^iM/\varpi^{i+1}M$ and the fact that the multiplication by $\varpi$ induces isomorphisms from $\varpi^iM/\varpi^{i+1}M$ to $\varpi^{i+1}M/\varpi^{i+2}M$ for $i\leq n-1$.
Now we find that
\[\dim_l(\Mfrak/\varpi\Mfrak)^{\rm tors}=\dim_l (\Mfrak\cap\varpi^nM)/\varpi^n\Mfrak.\]
The Lemma now follows from the following claim: 
\[u^{\lfloor\tfrac{e}{p-1}\rfloor}(\Mfrak\cap\varpi^nM)\subset \varpi^n\Mfrak.\]
We denote by $j$ the minimal integer such that $u^{j}(\Mfrak\cap\varpi^nM)\subset \varpi^n\Mfrak$.\\
Then $pj$ is the minimal integer $r$ such that $u^r\Phi(\phi^\ast (\Mfrak\cap\varpi^nM))\subset \varpi^n\Phi(\phi^\ast\Mfrak)$. But we have
\[\varpi^n\Phi(\phi^\ast\Mfrak)\supset u^e\varpi^n\Mfrak\supset u^{e+j}(\Mfrak\cap\varpi^nM)\supset u^{e+j}\Phi(\phi^\ast (\Mfrak\cap\varpi^nM)).\]
Hence $pj\leq e+j$ and the claim follows.
\end{proof}
\begin{lem}\label{finiteset}
Let $(M,\Phi)\in\Rcal(\Ocal_F/(\varpi^{n+1}))$. Then there are at most finitely many finitely generated $A_n[[u]]$-submodules $\Mfrak\subset M$ such that 
$\Mfrak[1/u]=M$ and 
\[u^e\Mfrak\subset\Phi(\phi^\ast\Mfrak)\subset \Mfrak.\]
\end{lem}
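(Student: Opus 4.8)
The plan is to reduce the finiteness assertion to the boundedness of the $u$-torsion of $\Mfrak/\varpi\Mfrak$ already established in Lemma \ref{boundedtorsion}. First I would record the elementary observation that a finitely generated $A_n[[u]]$-submodule $\Mfrak\subset M$ with $\Mfrak[1/u]=M$ is determined by its image in $M/u^N M$ for $N$ large enough: since $\Mfrak$ is a lattice, there are integers $a\leq b$ with $u^b M\subset u^a\Phi(\phi^\ast\Mfrak)\subset\dots$; more directly, the condition $u^e\Mfrak\subset\Phi(\phi^\ast\Mfrak)\subset\Mfrak$ forces $\Mfrak$ to contain $u^{c}M_0$ and to be contained in $u^{-c}M_0$ for a fixed lattice $M_0\subset M$ and a constant $c$ depending only on $d$, $e$ and $n$ (iterate the inclusions and use that $\Phi$ is an isomorphism after inverting $u$). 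Granting such a sandwich $u^{c}M_0\subset\Mfrak\subset u^{-c}M_0$, the module $\Mfrak$ is recovered from the finite-dimensional $l$-subspace (or rather $A_n$-submodule) $\Mfrak/u^{2c}M_0$ inside the finite-length module $u^{-c}M_0/u^{c}M_0$.

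The key point is therefore to produce the constant $c$, and this is exactly where Lemma \ref{boundedtorsion} enters. I would argue as follows: the inclusion $u^e\Mfrak\subset\Phi(\phi^\ast\Mfrak)$ together with $\Phi(\phi^\ast\Mfrak)\subset\Mfrak$ gives, after applying $\Phi\circ\phi^\ast$ repeatedly, a bound on how far $\Mfrak$ can stretch relative to its image under powers of Frobenius; combined with the torsion bound
\[\dim_l(\Mfrak/\varpi\Mfrak)^{\rm tors}\leq [k:\Fbb_p]d\tfrac{e}{p-1},\]
this controls the "size" of $\Mfrak$ modulo $\varpi$. Then I would lift inductively: writing $A_n=\Ocal_F/(\varpi^{n+1})\otimes_{\Fbb_p}k$, the reduction $\Mfrak/\varpi\Mfrak$ lies among finitely many lattices in $M/\varpi M$ (by the $n=0$ case), and given its reduction the module $\Mfrak$ itself varies in a set controlled by $\Mfrak\cap\varpi^i M$ for $i=1,\dots,n$, each of which is again squeezed between fixed multiples of a reference lattice by the same torsion estimate applied to the graded pieces $\varpi^i M/\varpi^{i+1}M$ (exactly the decomposition used in the proof of Lemma \ref{boundedtorsion}). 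Since at each of the finitely many steps there are only finitely many possibilities, the total number of such $\Mfrak$ is finite.

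The main obstacle I anticipate is making the sandwiching constant $c$ genuinely uniform over all admissible $\Mfrak$ — that is, showing that the condition $u^e\Mfrak\subset\Phi(\phi^\ast\Mfrak)\subset\Mfrak$ alone (without a priori knowing $\Mfrak$ is free, or even $p$-torsion-free, over $A_n[[u]]$) already pins $\Mfrak$ into a fixed lattice interval. The cleanest route is probably to fix one admissible $\Mfrak_0$ (e.g. the one coming from the closed point $x$, which exists by hypothesis) and to show that any other admissible $\Mfrak$ satisfies $u^{c}\Mfrak_0\subset\Mfrak\subset u^{-c}\Mfrak_0$ with $c=c(d,e,p)$: the upper bound follows because $\Phi(\phi^\ast(-))$ cannot increase elementary divisors by more than $e$ at each step and $\Phi$ is $u$-adically an isomorphism, and the lower bound follows symmetrically using $\Phi(\phi^\ast\Mfrak)\subset\Mfrak$. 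Once this is in place, finiteness over $A_n$ is immediate from finiteness of the set of $A_n$-submodules of the finite-length module $u^{-c}\Mfrak_0/u^{c}\Mfrak_0$, and the torsion bound of Lemma \ref{boundedtorsion} is what guarantees the estimate survives reduction to the residue field so that the induction on $n$ closes.
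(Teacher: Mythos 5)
Your proposal contains the correct core idea --- sandwich every admissible $\Mfrak$ between fixed multiples of a reference lattice and then count --- but you have entangled it with a second, unnecessary argument, and misidentified where the sandwiching constant actually comes from.

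The decisive simplification, which is what the paper uses, is that $M$ is just a finite-dimensional $l((u))$-vector space and every finitely generated $A_n[[u]]$-submodule $\Mfrak\subset M$ with $\Mfrak[1/u]=M$ is in particular an $l[[u]]$-lattice in $M$. Once you see this, the statement reduces to the well-known fact that, on the category of \'etale $\phi$-modules over $l((u))$, any family of $\Phi$-stable lattices with $u^e$-bounded cokernel of the linearisation is squeezed between two fixed lattices (Kisin, Proposition 2.1.7). The mechanism is the one you sketch in your third paragraph: $\Phi$ is $\phi$-semilinear, so it scales ``distances'' from a reference lattice by $p$, while the two-sided condition $u^e\Mfrak\subset\Phi(\phi^\ast\Mfrak)\subset\Mfrak$ only allows $\Phi(\phi^\ast\Mfrak)$ to wander at most $e$ away from $\Mfrak$; the contraction $p>1$ then forces both $\min\{i: u^i\Mfrak_0\subset\Mfrak\}$ and $\min\{i:\Mfrak\subset u^{-i}\Mfrak_0\}$ into a bounded range. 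After this sandwich, finiteness is immediate --- there are finitely many $l[[u]]$-lattices, a fortiori finitely many $A_n[[u]]$-submodules, between two fixed lattices --- and no induction on the $\varpi$-filtration is needed.

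Where your proposal goes wrong is the claim that ``this is exactly where Lemma \ref{boundedtorsion} enters.'' That lemma controls an intrinsic invariant of $\Mfrak$, namely the $u$-torsion of $\Mfrak/\varpi\Mfrak$, and says nothing about the position of $\Mfrak$ relative to a fixed lattice; it would not, by itself, produce the constant $c$. In the paper that torsion bound plays no role at all in Lemma \ref{finiteset}; it is used later (in Proposition \ref{algebraizable}) for a completely different purpose, namely to make the inverse limit of the $\Mfrak_n$ stabilize. Similarly, the inductive lift over the layers $\varpi^iM/\varpi^{i+1}M$ that you outline is harmless but superfluous once you work over $l[[u]]$ from the start. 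In short: keep your third paragraph, which is essentially the right argument, drop the appeal to Lemma \ref{boundedtorsion} and the $n$-induction, and phrase the whole thing in terms of $l[[u]]$-lattices.
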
 
\begin{proof}
The module $M$ is a $nd[k:\Fbb_p]$-dimensional $l((u))$ vector space. Every finitely generated $A_n[[u]]$ submodule $\Mfrak\subset M$ with $\Mfrak[1/u]=M$ is an $l[[u]]$-lattice in $M$. Hence the argument of \cite[Proposition 2.1.7]{Kisin} shows that there exists a lattice $\Mfrak_0\subset M$ and integers $i_1,i_2\in\Z$ such that all $\Mfrak\subset M$ satisfying the properties of the Lemma satisfy
\[u^{i_1}\Mfrak_0\subset\Mfrak\subset u^{i_2}\Mfrak_0.\]
These are only finitely many lattices.
\end{proof}
\begin{prop}\label{algebraizable}
Let $(\widehat{M},\widehat{\Phi})$ be a point $\Spf\ \Ocal_F\rightarrow \widehat{\Rcal}$ and denote by $(M_n,\Phi_n)$ the reduction modulo $\varpi^{n+1}$, i.e. the $\phi$-module defined by $\Spec\ \Ocal_F/\varpi^{n+1}\rightarrow \Rcal$. Assume that there exist finitely generated $A_n[[u]]$ submodules $\Mfrak_n\subset M_n$ such that
$\Mfrak_n[1/u]=M_n$ and 
\[u^e\Mfrak_n\subset\Phi_n(\phi^\ast\Mfrak_n)\subset\Mfrak_n.\]
Then there exists $(M,\Phi)\in\Rcal(\Ocal_F)$ such that
\[(M/\varpi^{n+1}M,\Phi\mod\varpi^{n+1})=(M_n,\Phi_n).\]
\end{prop}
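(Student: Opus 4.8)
The plan is to combine the finiteness of Lemma \ref{finiteset} with a compactness argument to produce a compatible family of Kisin lattices, and then to use the uniform torsion bound of Lemma \ref{boundedtorsion} both to glue them and to control their reductions. First I would, for every $n$, let $X_n$ be the set of finitely generated $A_n[[u]]$-submodules $\Mfrak_n\subset M_n$ with $\Mfrak_n[1/u]=M_n$ and $u^e\Mfrak_n\subset\Phi_n(\phi^\ast\Mfrak_n)\subset\Mfrak_n$. By Lemma \ref{finiteset} each $X_n$ is finite, and by hypothesis each $X_n$ is nonempty. Sending $\Mfrak_{n+1}\in X_{n+1}$ to its image in $M_n=M_{n+1}/\varpi^{n+1}M_{n+1}$ defines a map $X_{n+1}\to X_n$: since $\phi^\ast$ is exact and $\Phi$ is compatible with reduction, the image again satisfies the two inclusions, and it generates $M_n$ over $A_n((u))$ because $M_{n+1}\twoheadrightarrow M_n$. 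As an inverse limit of nonempty finite sets is nonempty, I would fix a compatible family $(\Mfrak_n)_{n\ge0}$, $\Mfrak_n\in X_n$, with $\Mfrak_n$ equal to the image of $\Mfrak_{n+1}$; the transition maps $\Mfrak_{n+1}\twoheadrightarrow\Mfrak_n$ are then surjective.

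Next I would set $\Mfrak:=\varprojlim_n\Mfrak_n$, a module over $\varprojlim_n A_n[[u]]=\Ocal_F\widehat{\otimes}_{\Z_p}W[[u]]$, and $M:=\Mfrak[1/u]$, $\Phi:=\varprojlim_n\Phi_n$. Surjectivity of the transition maps gives $\Mfrak\twoheadrightarrow\Mfrak_n$, and the crucial point is the chain of inclusions
\[
\varpi^{n+1}\Mfrak\ \subseteq\ \kernel(\Mfrak\to\Mfrak_n)\ \subseteq\ u^{-c}\varpi^{n+1}\Mfrak\qquad\text{inside}\ \Mfrak[\tfrac1u],\qquad c:=\lfloor\tfrac{e}{p-1}\rfloor.
\]
The left inclusion holds because $\varpi^{n+1}$ kills $\Mfrak_n$. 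For the right one, if $\mfrak=(\mfrak_j)_j$ lies in $\kernel(\Mfrak\to\Mfrak_n)$ then $\mfrak_j\in\Mfrak_j\cap\varpi^{n+1}M_j$ for every $j$, and the estimate at the heart of the proof of Lemma \ref{boundedtorsion} — which uses only $u^e\Mfrak_j\subseteq\Phi_j(\phi^\ast\Mfrak_j)\subseteq\Mfrak_j$ — applies with $\varpi^{n+1}$ in place of the top power to give $u^c(\Mfrak_j\cap\varpi^{n+1}M_j)\subseteq\varpi^{n+1}\Mfrak_j$; hence $u^c\mfrak\in\varpi^{n+1}\Mfrak$. Inverting $u$ yields $\kernel(M\to M_n)=\varpi^{n+1}M$ and therefore a canonical, $\Phi$-equivariant isomorphism $M/\varpi^{n+1}M\xrightarrow{\sim}M_n$. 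Since moreover each $\Phi_n$ is injective on $\Mfrak_n$ and $\coker(\Phi\colon\phi^\ast\Mfrak\to\Mfrak)=\varprojlim_n\coker\Phi_n$ is killed by $u^e$, the map $\Phi\colon\phi^\ast M\to M$ is an isomorphism. Thus, once we know $(M,\Phi)\in\Rcal(\Ocal_F)$, it is the required algebraization.

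It remains to see that $M$ is fpqc-locally free of rank $d$ over $\Ocal_F\widehat{\otimes}_{\Z_p}W((u))$. Since the bounds for an $E$-height $\le1$ lattice produced in the proof of Lemma \ref{finiteset} may be taken to depend only on $d$ and $e$, and not on $n$, all the $\Mfrak_n$ are contained in $u^{-k}$ times the reduction of one fixed $\Ocal_F\widehat{\otimes}_{\Z_p}W[[u]]$-lattice in $\varprojlim_n M_n$, for a single integer $k$; passing to the limit, $\Mfrak$ is finitely generated over the noetherian ring $\Ocal_F\widehat{\otimes}_{\Z_p}W[[u]]$, hence $M$ is finitely generated over $\Ocal_F\widehat{\otimes}_{\Z_p}W((u))$. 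Combined with the fact that $M/\varpi^{n+1}M\cong M_n$ is locally free of rank $d$, a Nakayama and $\varpi$-adic approximation argument — choosing compatible bases of the $M_n$, which is legitimate since $\varpi$ is nilpotent, hence lies in the Jacobson radical of each $A_n((u))$ — yields local freeness of $M$, as in \cite[Proposition 2.4.6]{Kisin}.

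I expect this last step to be the main obstacle. The inverse-limit module $\Mfrak$ is \emph{not} flat over $\Ocal_F\widehat{\otimes}_{\Z_p}W[[u]]$: by Lemma \ref{boundedtorsion} its special fibre $\Mfrak/\varpi\Mfrak$ acquires a non-trivial $u$-torsion part, and one has to pass to $M=\Mfrak[1/u]$, where this torsion is annihilated, to obtain the desired local freeness. The whole construction rests on the bound in Lemma \ref{boundedtorsion} being uniform in $n$; if the $u$-torsion of $\Mfrak_n/\varpi\Mfrak_n$ were allowed to grow with $n$, then $\Mfrak$ would fail to be finitely generated and the argument would collapse. The compactness step and the reduction identifications $M/\varpi^{n+1}M\cong M_n$ are, by comparison, formal.
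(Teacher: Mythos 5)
Your proof takes a genuinely different route from the paper's, replacing the paper's constant-torsion subsequence plus compatible-presentation argument by a direct control of the kernels $\kernel(\Mfrak\to\Mfrak_n)$. The key new observation — that the proof of Lemma~\ref{boundedtorsion} actually gives the uniform bound $u^{\lfloor e/(p-1)\rfloor}(\Mfrak_j\cap\varpi^{m}M_j)\subset\varpi^{m}\Mfrak_j$ for every intermediate power $\varpi^m$, not just the top one — is a nice simplification and does yield $\varpi^{n+1}\Mfrak\subset\kernel(\Mfrak\to\Mfrak_n)\subset u^{-c}\varpi^{n+1}\Mfrak$ and hence $M/\varpi^{n+1}M\cong M_n$ after inverting $u$. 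Two points in that step should be spelled out, though: first, passing from ``$u^c\mfrak_j\in\varpi^{n+1}\Mfrak_j$ for all $j$'' to ``$u^c\mfrak\in\varpi^{n+1}\Mfrak$'' requires $\varprojlim_j\varpi^{n+1}\Mfrak_j=\varpi^{n+1}\varprojlim_j\Mfrak_j$, which in turn needs a Mittag--Leffler argument for $\lim^1\Mfrak_j[\varpi^{n+1}]$ (it does hold, since the transition maps on $\Mfrak_j[\varpi^{n+1}]\subset\varpi^{j-n}M_j$ eventually vanish); second, the identification $\coker(\Phi)=\varprojlim\coker\Phi_n$ that you invoke to prove $\Phi$ is an isomorphism is not a formal fact, since cokernel does not commute with inverse limits — once local freeness of $M$ is known, it is cleaner to observe $\Phi$ is a map of free rank-$d$ modules whose determinant reduces to a unit modulo every $\varpi^{n+1}$.

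The genuine gap is in the last paragraph, and you rightly flag it yourself. Your claim that ``the bounds for an $E$-height $\le1$ lattice produced in the proof of Lemma~\ref{finiteset} may be taken to depend only on $d$ and $e$'' is doing heavy lifting and is not available from Lemma~\ref{finiteset} as stated, which bounds lattices in a fixed $M_n$ against a reference lattice in that same $M_n$; you need a single reference lattice in the limit whose reductions compare uniformly with all the $\Mfrak_n$, which is exactly the content of the \emph{separate} argument in the proof of Proposition~\ref{existlift} and is not free here. Moreover, ``a Nakayama and $\varpi$-adic approximation argument'' does not by itself give local freeness of $M$: compatible bases of the $M_n$ produce a free structure on $\widehat M=\varprojlim M_n$ over the completed ring $(\Ocal_F\otimes_{\Fbb_p}k)\{\{u\}\}$, not on $M\subsetneq\widehat M$. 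What actually closes the gap is the structure theorem over the principal ideal domain $\Ocal_F((u))$ (after decomposing $\Ocal_F\otimes_{\Fbb_p}k\cong\prod_\psi\Ocal_F$): since $M^{(\psi)}$ is finitely generated, $M^{(\psi)}\cong\Ocal_F((u))^s\oplus T$ with $T$ torsion, and the identification $M/\varpi^{n+1}M\cong M_n$ with $M_n$ locally free of rank $d$ forces $T=0$ and $s=d$. The paper avoids all of this by first building a compatible finite presentation along the constant-torsion subsequence — this is where it gets finite generation for free — and then identifies a $\Phi$-stable finitely generated $(\Ocal_F\otimes_{\Fbb_p}k)((u))$-submodule $N\subset\widehat M$, proving freeness component-by-component using $\Phi$-stability and the PID structure. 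Your route is viable and arguably more direct, but you need to justify finite generation honestly and replace the Nakayama gesture by the PID argument.
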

\begin{proof}
We denote by $\mathcal{Z}_n$ the set of all finitely generated $A_n[[u]]$-submodules $\mathfrak{N}\subset M_n$ such that $\mathfrak{N}[1/u]=M_n$ and 
\[u^e\mathfrak{N}\subset\Phi_n(\phi^\ast\mathfrak{N})\subset \mathfrak{N}.\]
By assumption these sets are non empty and by Lemma $\ref{finiteset}$ they are finite. Further if $\mathfrak{N}\in M_n$ and $m<n$, then the image of $\mathfrak{N}$ under the map 
\[M_n\longrightarrow M_m\]
defines an element of $\mathcal{Z}_m$, denoted by $f_{nm}(\mathfrak{N})$. As the sets $\mathcal{Z}_i$ are non empty and finite we can inductively construct
a sequence $\widetilde{\Mfrak}_n\in\mathcal{Z}_n$ such that $f_{nm}(\widetilde{\Mfrak}_n)=\widetilde{\Mfrak}_m$ for $m\leq n$. We denote this sequence again by $\Mfrak_n$ instead of $\widetilde{\Mfrak}_n$.\\
By Lemma $\ref{boundedtorsion}$ there are only finitely many possibilities for the isomorphism class of the $u$-torsion in $\Mfrak_n/\varpi\Mfrak_n$.
Hence there exists a strictly increasing sequence $n_i\in\mathbb{N}$ such that 
\begin{equation}\label{equalutors}
\begin{xy}
\xymatrix{
\Mfrak_{n_i}/\varpi\Mfrak_{n_i}\ar[r]^{\cong}&\Mfrak_{n_j}/\varpi\Mfrak_{n_j}
}
\end{xy}
\end{equation}
for $i,j\in\mathbb{N}$. Now there is an isomorphism
\[\Mfrak_{n_i}/\varpi^{n_j+1}\Mfrak_{n_i}\cong \Mfrak_{n_j}\oplus (\Mfrak_{n_i}/\varpi^{n_j+1}\Mfrak_{n_i})^{\rm tors},\]
where the last summand is the $u$-torsion part of the left hand side. We find that
\[(\Mfrak_{n_i}/\varpi\Mfrak_{n_i})^{\rm tors}=(\Mfrak_{n_j}/\varpi\Mfrak_{n_j})^{\rm tors}\oplus ((\Mfrak_{n_i}/\varpi^{n_j+1}\Mfrak_{n_i})^{\rm tors})/\varpi.\]
Using $(\ref{equalutors})$ and Nakayama's Lemma it follows that
\[\begin{xy}\xymatrix{\Mfrak_{n_i}/\varpi^{n_j+1}\Mfrak_{n_i}\ar[r]^{\hspace{0.5cm}\cong}&\Mfrak_{n_j}}\end{xy}\]
for all $j\leq i$. 
Especially there is an $r\in\mathbb{N}$ (independent of $i$) and generators $b_1^{(i)},\dots,b_r^{(i)}$ of $\Mfrak_{n_i}$ as an $A_{n_i}[[u]]$-module
such that the $b^{(i)}_\nu$ reduce to $b^{(j)}_\nu$ modulo $\varpi^{n_j+1}$ for all $j\leq i$.
Choosing a compatible expression of $\Phi_n(b^{(i)}_j)$ in terms of the $b^{(i)}_j$ we can define commutative diagrams for $j\leq i$
\[\begin{xy}\xymatrix{
(A_{n_i}[[u]]^r,\widetilde{\Phi}_{n_i}) \ar[r]\ar[d] & (\Mfrak_{n_i},\Phi_{n_i})\ar[d] \\
(A_{n_j}[[u]]^r,\widetilde{\Phi}_{n_j}) \ar[r] & (\Mfrak_{n_i},\Phi_{n_j})
}\end{xy}\]
where all arrows are surjective. In the limit we get morphisms
\[\begin{xy}\xymatrix{
((\Ocal_F\otimes_{\Fbb_p}k)[[u]]^r,\widetilde{\Phi}) \ar[d]\\
\widetilde{M}=\lim\limits_{\leftarrow}(A_{n_i}((u))^r,\widetilde{\Phi}_{n_i}) \ar[r] & (\widehat{M},\widehat{\Phi}).
}\end{xy}\]
Here the lower arrow is surjective by the Mittag-Leffler criterion: The modules in question have finite length. Note that we do not claim that the linearisation of $\widetilde{\Phi}$ is an isomorphism after inverting $u$. \\
Now the image of the vertical arrow defines (after inverting $u$) a free $(\Ocal_F\otimes_{\Fbb}k)((u))$-submodule $\widetilde{N}$ of the $(\Ocal_F\otimes_{\Fbb_p}k)\{\{u\}\}$-module $\widetilde{M}$ such that
\[\begin{xy}\xymatrix{\widetilde{N}\otimes_{\Ocal_F((u))}\Ocal_F\{\{u\}\}\ar[r]^{\hspace{13mm}\cong} & \widetilde{M}.}\end{xy}\]
The image of $\widetilde{N}$ under $\widetilde{M}\rightarrow \widehat{M}$ defines a finitely generated $(\Ocal_F\otimes_{\Fbb_p}k)((u))$ submodule $N$ such that 
\[\begin{xy}\xymatrix{
N\otimes_{\Ocal_F((u))}\Ocal_F\{\{u\}\}\ar[r]^{\hspace{13mm}\cong} & \widehat{M}.
}\end{xy}\]
Further $N$ is $\widehat{\Phi}$-stable by construction. We claim that $N$ is free.\\
As $k\subset l$ we have isomorphisms 
\[(\Ocal_F\otimes_{\Fbb_p}k)((u))\longrightarrow\prod_{\psi:k\hookrightarrow l}\Ocal_F((u)).\]
And hence $\widehat{M}=\widehat{M}^{(\psi_0)}\times\widehat{M}^{(\phi\psi_0)}\times\dots\times\widehat{M}^{(\phi^{f-1}\psi_0)}$ where $\psi_0$ is a fixed embedding, $\phi$ is the absolute Frobenius on $k$ and $f=[k:\Fbb_p]$. 
The endomorphism $\widehat{\Phi}$ maps $\widehat{M}^{(\phi^i\psi_0)}$ to $\widehat{M}^{(\phi^{i+1}\psi_0)}$.
As $N$ is $\widehat{\Phi}$-stable we find that $N=N^{(\psi_0)}\times\dots\times N^{(\phi^{f-1}\psi_0)}$, where $N^{(\phi^i\psi_0)}$ is a finitely generated $\Ocal_F((u))$-submodule of $\widehat{M}^{(\phi^i\psi_0)}$ that generates $\widehat{M}^{(\phi^i\psi_0)}$ over $\Ocal_F\{\{u\}\}$ and hence is free of rank $d$, as $\Ocal_F((u))$ is principal.\\
Now $(N,\widehat{\Phi})$ is the object claimed in the Proposition: It follows from the construction that $(N,\widehat{\Phi})$ reduces to $(M_n,\Phi_n)$ modulo $\varpi^{n+1}$ and hence it follows from Nakayama's lemma that the linearisation of $\widehat{\Phi}$ is invertible on $N$.
\end{proof}
Before we continue we want to remind the reader that not every $A_n[[u]]$-submodule $\Mfrak_n\subset M_n$ satisfying $u^e\Mfrak_n\subset\Phi_n(\phi^\ast\Mfrak_n)\subset \Mfrak_n$ defines an $\Ocal_F/\varpi^{n+1}$-valued point of $\Ccal_K$. This is only the case if $\Mfrak_n$ is a free $A_n[[u]]$-module.
\begin{prop}\label{existlift}
Let $(M,\Phi)\in\Rcal(\Ocal_F)$ and denote by $(M_n,\Phi_n)\in\Rcal(\Ocal_F/\varpi^{n+1})$ the reduction modulo $\varpi^{n+1}$. Assume that there exist finitely generated $A_n[[u]]$-submodules $\Mfrak_n\subset M_n$ such that $\Mfrak_n[1/u]=M_n$ and 
\[u^e\Mfrak_n\subset \Phi_n(\phi^\ast\Mfrak_n)\subset\Mfrak_n.\]
Then there exists the diagonal arrow in the diagram
\[\begin{xy}\xymatrix{
&\Ccal_K \ar[d]\\
\Spec\ \Ocal_F \ar[ru]\ar[r] & \Rcal.
}\end{xy}\]
\end{prop}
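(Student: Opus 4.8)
The plan is to run the argument of Proposition \ref{algebraizable} while carrying along the lattices $\Mfrak_n$ rather than only the \'etale $\phi$-modules $M_n$, and then to enlarge the resulting submodule to a locally free one. First I would note that, by Lemma \ref{finiteset}, the sets $\mathcal{Z}_n$ of finitely generated $A_n[[u]]$-submodules $\Nfrak\subset M_n$ with $\Nfrak[1/u]=M_n$ and $u^e\Nfrak\subset\Phi_n(\phi^\ast\Nfrak)\subset\Nfrak$ are finite, and by hypothesis non-empty; reduction modulo $\varpi^{m+1}$ gives transition maps $\mathcal{Z}_n\to\mathcal{Z}_m$, so, exactly as in the proof of Proposition \ref{algebraizable}, one chooses a compatible system $\Mfrak_n\in\mathcal{Z}_n$ and, using Lemma \ref{boundedtorsion} to bound $\dim_l(\Mfrak_n/\varpi\Mfrak_n)^{\rm tors}$, a strictly increasing sequence $n_i$ along which this $u$-torsion is constant. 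As in loc. cit., this yields isomorphisms $\Mfrak_{n_i}/\varpi^{n_j+1}\Mfrak_{n_i}\cong\Mfrak_{n_j}$ for $j\leq i$ together with a compatible system of generators, so that $\widehat\Mfrak:=\varprojlim_i\Mfrak_{n_i}$ is a finitely generated, $\varpi$-adically complete $(\Ocal_F\otimes_{\Fbb_p}k)[[u]]$-module with $\widehat\Mfrak/\varpi^{n_j+1}\widehat\Mfrak=\Mfrak_{n_j}$. Since $\phi$ is finite flat, $\phi^\ast$ commutes with this inverse limit, and as the maps $\Phi_{n_i}$ are injective (being isomorphisms after inverting $u$) the inclusions $u^e\Mfrak_{n_i}\subset\Phi_{n_i}(\phi^\ast\Mfrak_{n_i})\subset\Mfrak_{n_i}$ pass to the limit: $u^e\widehat\Mfrak\subset\widehat\Phi(\phi^\ast\widehat\Mfrak)\subset\widehat\Mfrak$.

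Next I would identify $\widehat\Mfrak[1/u]$ with $M$. The pair $(\widehat\Mfrak[1/u],\widehat\Phi)$ is an \'etale $\phi$-module over $(\Ocal_F\otimes_{\Fbb_p}k)((u))$, and using the decomposition $(\Ocal_F\otimes_{\Fbb_p}k)((u))=\prod_\psi\Ocal_F((u))$, the $\widehat\Phi$-stability and the fact that $\Ocal_F((u))$ is principal --- exactly as at the end of the proof of Proposition \ref{algebraizable} --- it is free of rank $d$, and its base change along $(\Ocal_F\otimes_{\Fbb_p}k)((u))\to(\Ocal_F\otimes_{\Fbb_p}k)\{\{u\}\}$ is $\widehat M=\varprojlim_n M_n$. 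As $(M,\Phi)$ has the same base change and this descent is unique, I may identify $(\widehat\Mfrak[1/u],\widehat\Phi)$ with $(M,\Phi)$, so that $\widehat\Mfrak$ is a finitely generated $(\Ocal_F\otimes_{\Fbb_p}k)[[u]]$-lattice in $M$ of $E$-height $\leq 1$. It need not be locally free, since $(\widehat\Mfrak/\varpi\widehat\Mfrak)^{\rm tors}$ may be non-zero; but $\widehat\Mfrak$ is free of rank $d$ away from the closed point of each factor $\Spec\,l[[\varpi,u]]$ (because $\widehat\Mfrak[1/u]$ and $\widehat\Mfrak[1/\varpi]$ are free of rank $d$), so I would replace $\widehat\Mfrak$ by its reflexive hull $\Mfrak\subset M$. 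Over the two-dimensional regular local rings $l[[\varpi,u]]$ reflexive modules are free, so $\Mfrak$ is locally free of rank $d$ with $\Mfrak[1/u]=M$; as $\phi$ is flat, $\widehat\Phi$ extends to $\Phi:\phi^\ast\Mfrak\to\Mfrak$, and $\Mfrak$ is $\Phi$-stable. To see $u^e\Mfrak\subset\Phi(\phi^\ast\Mfrak)$ I would argue as follows: $\Phi$ is injective with $\det\Phi$ a non-zero-divisor which is invertible after inverting $u$, hence equal to a unit times $u^m$ on each factor, so $\det(\Phi\bmod\varpi)\neq 0$ and the cokernel $C$ of $\Phi$ has no $\varpi$-torsion; but $\Mfrak$ and $\widehat\Mfrak$ agree after inverting $\varpi$, so $C[1/\varpi]$ is a localization of $\widehat\Mfrak/\widehat\Phi(\phi^\ast\widehat\Mfrak)$ and therefore killed by $u^e$, whence $u^eC=0$. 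Thus $(\Mfrak,\Phi)$ is an $\Ocal_F$-valued point of $\Ccal_K$ (Remark \ref{charpok}) with $(\Mfrak[1/u],\Phi)=(M,\Phi)$, i.e. the desired diagonal arrow.

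The main obstacle is this last point. The naive inverse limit $\widehat\Mfrak$ genuinely can fail to be locally free over the two-dimensional ring $(\Ocal_F\otimes_{\Fbb_p}k)[[u]]$ --- its reduction modulo $\varpi$ may have $u$-torsion, of size controlled by Lemma \ref{boundedtorsion} --- so one cannot avoid enlarging it, and the subtle step is to verify that the enlargement (the reflexive hull) still satisfies the height bound $u^e\Mfrak\subset\Phi(\phi^\ast\Mfrak)$. This works precisely because the cokernel of $\Phi$ is $\varpi$-torsion free once one knows that $\Phi$ is injective modulo $\varpi$, so that the bound on the cokernel after inverting $\varpi$ --- which is inherited from $\widehat\Mfrak$ --- is in fact an integral bound. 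A secondary subtlety, already present in Proposition \ref{algebraizable}, is the identification of $\widehat\Mfrak[1/u]$ with the prescribed \'etale $\phi$-module $M$, resting on the uniqueness of descent of \'etale $\phi$-modules from $\Ocal_F\{\{u\}\}$ to $\Ocal_F((u))$.
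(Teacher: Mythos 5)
Your argument takes a genuinely different route from the paper's, and it is worth comparing them. The paper's proof of Proposition~\ref{existlift} makes no use of reflexive hulls: it first invokes the valuative criterion of properness for $\Ccal_r\rightarrow\Rcal$ to produce an auxiliary locally free lattice $\Nfrak\subset M$ over $\Ocal_F$, then uses the boundedness argument of \cite[Proposition 2.1.7]{Kisin} to sandwich $\Mfrak:=\varprojlim\Mfrak_n$ between $u^s\Nfrak$ and $u^{-s}\Nfrak$, and finally applies the valuative criterion a second time (for $\Ccal_e\rightarrow\Rcal$, together with the closedness of $\Ccal_K^1$ in $\Ccal_e^1$) to repair the failure of local freeness. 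You instead replace both uses of the valuative criterion with commutative algebra over the two-dimensional regular local rings $l[[\varpi,u]]$: pass to the reflexive hull to get local freeness, and use that the cokernel of the resulting $\Phi$ is $\varpi$-torsion free (since $\det\Phi$ becomes a unit after inverting $u$ on each factor, hence is a unit times a power of $u$ and so is nonzero modulo $\varpi$) to see that the height bound $u^e\Mfrak\subset\Phi(\phi^\ast\Mfrak)$ survives the enlargement. This is a clean idea, closer in spirit to the local arguments in \cite{Kisin}, and it makes the step from ``formal'' lattice to honest point of $\Ccal_K$ conceptually transparent.

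However, there is a real gap at the point you yourself flag. The sandwich $u^s\Nfrak\subset\Mfrak\subset u^{-s}\Nfrak$ in the paper is not a mere convenience: since $\Nfrak$ is a lattice inside the \emph{given} $M$, it automatically places $\varprojlim\Mfrak_n$ inside $M\subset\widehat M$, which is exactly what is needed to obtain the diagonal arrow over the prescribed point $\Spec\ \Ocal_F\rightarrow\Rcal$. Your construction only yields $\widehat\Mfrak\subset\widehat M=\varprojlim M_n$, and identifying $\widehat\Mfrak[1/u]$ with $M$ (rather than with some other $\widehat\Phi$-stable free $\Ocal_F((u))$-submodule of $\widehat M$ that also completes to $\widehat M$) rests on the asserted uniqueness of descent, i.e.\ full faithfulness of $-\otimes_{\Ocal_F((u))}\Ocal_F\{\{u\}\}$ on \'etale $\phi$-modules. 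This is not ``already present in Proposition~\ref{algebraizable}'': that proposition constructs \emph{one} descent and makes no uniqueness claim. It is also not obvious: $\Ocal_F((u))\rightarrow\Ocal_F\{\{u\}\}$ is not faithfully flat (for instance $u-\varpi$ becomes a unit on the right), so a nonzero torsion $\Ocal_F((u))$-module can perfectly well die after base change, and the $\phi$-structure must be used in an essential way (e.g.\ one must rule out a $\phi$-stable effective divisor of $\Spec\ \Ocal_F((u))$ supported away from $(\varpi)$). You should either prove this full faithfulness, or sidestep it as the paper does by first constructing the auxiliary lattice $\Nfrak\subset M$ via the valuative criterion and using it to force $\widehat\Mfrak\subset M$; with that fix, the rest of your reflexive-hull argument goes through.
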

\begin{proof}
Consider the free $(F\otimes_{\Fbb_p}k)((u))$-module $M\otimes_{\Ocal_F((u))}F((u))$.
We choose an $(F\otimes_{\Fbb_p}k)[[u]]$-lattice $\widetilde{\mathfrak{N}}\subset M\otimes_{\Ocal_F((u))}F((u))$. As the linearisation of $\Phi$ is an isomorphism, there exist $r\in\mathbb{N}$ such that
\[u^r\widetilde{\mathfrak{N}}\subset\Phi(\phi^\ast\widetilde{\mathfrak{N}})\subset u^{-r}\widetilde{\mathfrak{N}},\]
i.e. $\widetilde{\mathfrak{N}}$ is an $F$-valued point of the stack $\Ccal_r$ defined in $(\ref{stackCm})$.\\
By \cite[Corollary 2.6]{phimod} and the valuative criterion of properness, the diagonal arrow in the diagram below exists,
\[\begin{xy}\xymatrix{
\Spec\ F\ar[r]\ar[d]&\Ccal_r\ar[d] \\
\Spec\ \Ocal_F\ar[ru]\ar[r]&\Rcal.
}\end{xy}\]
This means that $\widetilde{\mathfrak{N}}$ extends to an $(\Ocal_F\otimes_{\Fbb_p}k)[[u]]$-lattice $\mathfrak{N}$ such that
\[u^r\mathfrak{N}\subset\Phi(\phi^\ast\mathfrak{N})\subset u^{-r}\mathfrak{N}.\]
We denote by $\mathfrak{N}_n$ the reduction of $\mathfrak{N}$ modulo $\varpi^{n+1}$.\\
By assumption there are finitely generated $A_n[[u]]$-submodules $\Mfrak_n\subset M_n$ such that $\Mfrak_n[1/u]=M_n$ and 
\[u^e\Mfrak_n\subset\Phi_n(\phi^\ast\Mfrak_n)\subset \Mfrak_n.\]
By the same argument as in the proof of Proposition $\ref{algebraizable}$ we can assume that $\Mfrak_n$ maps onto $\Mfrak_{n-1}$ under the projection $M_n\rightarrow M_{n-1}$ for all $n$. Now the argument of \cite[Proposition 2.1.7]{Kisin} shows that there is an integer $s$ only depending on $r$ and $e$ such that
\[u^s\mathfrak{N}_n\subset\Mfrak_n\subset u^{-s}\mathfrak{N}_n.\]
If we write $\Mfrak$ for $\lim\limits_{\leftarrow}\Mfrak_n$, then this shows
\[u^s\mathfrak{N}\subset\Mfrak\subset u^{-s}\mathfrak{N}.\]
Hence $\Mfrak$ is finitely generated over $\Ocal_F[[u]]$ and contains an $(\Ocal_F\otimes_{\Fbb_p}k)((u))$-basis of $M$. Further it still satisfies $u^e\Mfrak\subset\Phi(\phi^\ast\Mfrak)\subset\Mfrak$ and $\Mfrak\otimes_{\Ocal_F[[u]]}F[[u]]$ is free over $(F\otimes_{\Fbb_p}k)[[u]]$. Hence we obtain the following commutative diagram
\[
\begin{xy}
\xymatrix{
\Spec\ F\ar[r]\ar[d] & \Ccal_K^1\ar[d] \\
\Spec\ \Ocal_F\ar[r] & \Rcal,
}
\end{xy}
\]
where $\Ccal_K^1=\Ccal_K\times_{\Spec\ \Z_p}\Spec\ \Z/p\Z$ is the reduction of $\Ccal_K$ modulo $p$ (compare \cite[3.b.]{phimod}). By loc. cit. the stack $\Ccal_K^1$ is a closed substack of $\Ccal^1_e=\Ccal_e\times_{\Spec\ \Z_p}\Spec\ \Z/p\Z$. Using the valuative criterion of properness again we obtain the desired arrow.
\end{proof}
\begin{proof}[Proof of Theorem \ref{maintheo}]
By Proposition $\ref{charzeroiso}$ the morphism $C_K(\rho)\rightarrow \Spec\,R^\fl$ is an isomorphism in the generic fiber over $W(\Fbb)$. Especially it is surjective. \\
We write $\overline{C}_K(\rho)=C_K(\rho)\otimes_{W(\Fbb)}\Fbb$ for the special fiber of $C_K(\rho)$ and $\overline{R}^\fl$ for $R^\fl/pR^\fl$.
Let $\eta$ be a point of $\overline{R}^\fl$ that is not the unique closed point $x_0$. We mark the specialization $\eta\rightsquigarrow x_0$ by a morphism
\[\Spec\ \Ocal_F\longrightarrow \overline{R}^\fl.\]
where $\Spec\ \Ocal_F$ is a complete discrete valuation ring and the morphism maps the generic point of $\Spec\ \Ocal_F$ to $\eta$ and the special point to $x_0$. By a Zariski density argument is suffices to assume that the residue field of $\Ocal_F$ contains $k$ (recall that $R^\fl$ is a quotient of a power series ring in finitely many variables over $W(\Fbb)$).
The morphism
\begin{equation}\label{localmap}
\Spf\ \Ocal_F\longrightarrow \Spf\overline{R}^\fl\longrightarrow \Dcal_{\bar\rho_\infty}
\end{equation}
induces modules $(M_n,\Phi_n)\in\Rcal(\Ocal_F/\varpi^{n+1})$. By Kisin's classification of finite flat group schemes (Theorem $\ref{grpschemeclass}$) there exist finitely generated $k[[u]]$-submodules $\Mfrak_n\subset M_n$ such that $\Mfrak_n[1/u]=M_n$ and 
\[u^e\Mfrak_n\subset\Phi_n(\phi^\ast\Mfrak_n)\subset\Mfrak_n.\]
Replacing $\Mfrak_n$ by the $(\Ocal_F/\varpi^{n+1}\otimes_{\Fbb_p}k)[[u]]$-modules that it generates, we may assume that $\Mfrak_n$ is stable under the action of $\Ocal_F/\varpi^{n+1}$.
By Proposition $\ref{algebraizable}$ the arrow $\Spf\ \Ocal_F\rightarrow \widehat{\Rcal}$ is algebraizable to a morphism $\Spec\ \Ocal_F\rightarrow \Rcal$ and by Proposition $\ref{existlift}$ we obtain a commutative diagram
\[\begin{xy}\xymatrix{
& \Ccal_K \ar[d]\\
\Spec\ \Ocal_F \ar[ru]\ar[r]& \Rcal.
}\end{xy}\]
This yields a commutative diagram 
\[\begin{xy}\xymatrix{
& \widehat{C}_K(\rho)\ar[d]\\
\Spf\ \Ocal_F\ar[ru]\ar[r] & \Dcal_{\bar\rho_\infty}.
}\end{xy}\] 
We have to show that the arrow $\Spf\ \Ocal_F\rightarrow  \Dcal_{\bar\rho_\infty}$ factors over $R^\fl$ and that this morphism coincides with the arrow in $(\ref{localmap})$.\\
By \cite[Proposition 4.3]{phimod} we obtain from the morphisms $\Spec\ \Ocal_F/\varpi^{n+1}\rightarrow \Ccal_K$ flat $G_K$-representations such that the restriction to $G_{K_\infty}$ induces the objects $(M_n,\Phi_n)$ under the morphism $(\ref{deformequiv})$. 
By \cite[Theorem 3.4.3]{Breuil} the restriction to $G_{K_\infty}$ is fully faithful on the category of flat $p$-torsion $G_K$-representations and hence
the two morphisms $\Spf\ \Ocal_F\rightarrow \Spf\ R^\fl$ coincide. This yields the claim.
\end{proof}

\medskip

\address{Mathematisches Institut der Universit\"at Bonn, Endenicher Allee 60, 53115 Bonn, Germany}\\
\email{hellmann@math.uni-bonn.de}

\end{document}